\numberwithin{equation}{section}
\newtheorem{theorem}{Theorem}[section]
\newtheorem{proposition}{Proposition}[section]
\newtheorem{lemma}[theorem]{Lemma}
\theoremstyle{definition}
\newtheorem{definition}[theorem]{Definition}
\theoremstyle{remark}
\def\QSet{\mbox\textup{kern.24em \vrule width.03em height1.48ex depth-.051ex \kern-.26em Q}}
\def\S{{\mathbf S}}
\def\M{{{\mbox{\rm I\kern-.2em M}}}}
\def\PSet{\mbox{\rm I\kern-.22em P}}
\def\P{{\mathbf P}}
\def\T{{\mathbf T}}
\def\D{{\mathbf D}}
\def\J{{\mathbf J}}
\def\<{{\langle}}
\def\>{{\rangle}}
\def\size{\operatorname{size}}
\def\density{\operatorname{density}}
\title {Weighted bounds for variational Walsh-Fourier series}
\author{Yen Do}
\address{ Department of Mathematics, Yale University, New Haven CT 06511, USA}
\email {yenquang.do@yale.edu}
\thanks{Research supported in part by grant NSF-DMS-0635607002.}
\author{Michael Lacey}  
\address{ School of Mathematics, Georgia Institute of Technology, Atlanta GA 30332, USA}
\email {lacey@math.gatech.edu}
\thanks{Research supported in part by grant NSF-DMS 0968499 
	and  a grant from the Simons Foundation (\#229596 to Michael Lacey).}
\begin{document}
\begin{abstract}
For $1<p<\infty$ and a weight $w\in A_p$ and a function in $L^p([0,1], w)$ we show that variational sums with sufficiently large exponents of its Walsh--Fourier series are bounded in $L^p(w)$. This strengthens a result of Hunt--Young and is a weighted extension of a variation norm Carleson theorem of Oberlin--Seeger--Tao--Thiele--Wright. The proof uses phase plane analysis and a weighted extension of a variational inequality of L\'epingle.
\end{abstract}

\maketitle

\section{Introduction} \label{s.intro}
Let $f$ be a measurable function on $[0,1]$. The Walsh--Fourier series sum of $f$ given by
$$\sum_{k\ge 0} \langle f, W_k\rangle W_k(x) \ \ ,$$
 is a dyadic analogue of the Fourier series. We shall recall the definition of the Walsh system of functions $(W_k)_{k\ge 0}$ in Section~\ref{s.walsh}. It is standard that boundedness in $L^p$ of the maximal Walsh-Fourier sum
 $$Sf(x) := \sup_{n} |(S_n f)(x)|  \qquad   S_n f(x):=\sum_{0 \le k \le n} \langle f, W_k\rangle W_k(x) \ \ ,$$
leads to a.e.\thinspace convergence of the Walsh--Fourier series of functions in $L^p$. For $1<p<\infty$, this result holds, and is the Carleson theorem \cite{MR0199631} on the pointwise convergence of Fourier series. Also see Hunt~\cite{MR0238019}, for $1<p<2$, and Sj{\"o}lin \cite{MR0241885} for the Walsh case.

We are concerned with weighted estimates. For $1<p<\infty$ recall that a positive a.e.\thinspace  weight $w$ is in $A_p$ if the following bound holds uniformly over (dyadic) intervals:
$$ [w] _{A_p} := \sup_{I} \frac{1}{|I|}\int_I  w(x)dx \Biggl[\frac{1}{|I|}\int_I  w(x)^{-1/(p-1)} dx \Biggr]^{p-1} < \infty  \, .$$

In this paper we prove the following theorems. Below, $S_n f$ is assumed $0$ for $n<0$.

\begin{theorem}\label{t.maincor} Let $1<p<\infty$ and $w\in A_p$. Then there is an $ R= R (p,[w] _{A_p})<\infty$ such that 
	for all $r\in (R,\infty]$ we have
	\begin{equation}\label{e.maincor}\Bigl\|\sup_{M, N_0<\dots<N_M} \Bigl[
		\sum_{j=1}^M |S_{N_j} f -  S_{N_{j-1}}f|^r\Bigr]^{1/r}\Bigr\|_{L^p(w)} \le C \|f\|_{L^p(w)}  
\end{equation}
for some constant $C$ depending only on $w$, $p$, $r$. 
\end{theorem}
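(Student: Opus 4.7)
The plan is to adapt the phase-plane proof of the variational Walsh-Carleson theorem of Oberlin-Seeger-Tao-Thiele-Wright to the $A_p$-weighted setting. Two ingredients must be introduced alongside the classical tile/tree framework: weighted analogues of the size/density selection machinery (in the spirit of Hunt-Young), and a weighted L\'epingle $r$-variation inequality for dyadic martingales that plays the role of the core per-tree estimate. The threshold $r > R(p,[w]_{A_p})$ will emerge as the point at which both weighted ingredients close off simultaneously.

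After a standard measurable-selection linearization that reduces matters to a fixed pointwise choice of stopping times $0 \le N_0(x) < N_1(x) < \dots < N_M(x)$, I would expand each $S_{N_j} f - S_{N_{j-1}}f$ into Walsh wave packets, collect tiles into trees with a common top, and perform the usual OSTTW split of the variation into a ``short'' $r$-variation within trees plus a ``long'' $r$-variation across trees. Within a tree with top frequency $\xi_T$, the cut-off partial sums reduce, after modulation by the top character, to a dyadic martingale in the spatial variable; the short variation on this martingale is to be controlled, for $r$ large enough depending on $p$ and $[w]_{A_p}$, by a weighted L\'epingle inequality applied pointwise in $w\,dx$. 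The long variation is controlled at each selection level by a weighted Hunt-Young/Carleson-type bound for maximal truncations of wave-packet sums over a single tree -- essentially a weighted single-tree (BMO) lemma. Summing the per-tree variational bounds against the tree counts produced by the weighted size/density iteration yields the full $L^p(w)$ estimate.

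The main obstacle is the two weighted inputs just described. L\'epingle's unweighted constant blows up as $r \downarrow 2$, and its $A_p$-weighted analogue has a constant depending jointly on $r$ and $[w]_{A_p}$; tracking this dependence is what dictates the lower bound $R$ on the admissible variation exponents. In parallel, the tile-organization scheme must replace the Lebesgue density of tiles by a density adapted to $w$ (for instance, a weighted maximal density of $f$ against the tree top), and the classical energy lemma by a weighted energy/mass estimate that still permits geometric summation over size levels. Once the weighted single-tree lemma and the weighted L\'epingle inequality are in hand, the size/density iteration and the sum over selected top trees should close exactly as in the unweighted OSTTW argument, producing a constant depending only on $p$, $r$, and $[w]_{A_p}$, as claimed.
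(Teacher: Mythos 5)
Your high-level strategy — adapting the Oberlin--Seeger--Tao--Thiele--Wright phase-plane proof with a weighted tree/size/density decomposition and a weighted L\'epingle inequality as the core per-tree estimate — matches the paper's approach. However, your explanation of where the threshold $R(p,[w]_{A_p})$ comes from is incorrect, and the step that actually produces it is missing from your sketch. You attribute $R$ to a degeneration of the weighted L\'epingle constant, but the paper's weighted L\'epingle inequality (Lemma~\ref{l.weightLepingle}) holds for \emph{all} $r>2$, $1<p<\infty$, $w\in A_p$; its constant grows as $r\downarrow 2$ just as in the unweighted case and never imposes an $[w]_{A_p}$-dependent lower bound on the variation exponent.

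The true source of $R$ is the combination of the tile-decomposition constraints with the self-improvement of the $A_p$ class. The paper first proves the $q$-indexed Theorem~\ref{t.main}: for $w\in A_q$ with $q<p$, the variational bound holds under $r>2q$ and $1/r<1/q-1/p$, constraints that emerge from the two-sided geometric summations over tree scales in the restricted weak-type argument (Proposition~\ref{p.restrictedweaktype}), not from L\'epingle. Theorem~\ref{t.maincor} then follows because $w\in A_p$ implies $w\in A_{p-\epsilon}$ for some $\epsilon>0$ depending only on $p$ and $[w]_{A_p}$, and taking $q=p-\epsilon$ turns the two constraints into a finite lower bound $R$, which diverges as $\epsilon\to 0$, i.e.\ as $[w]_{A_p}\to\infty$. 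Your proposal never invokes this openness of $A_p$, so it has no mechanism producing the claimed dependence of $R$ on the $A_p$ characteristic. A smaller but substantive omission: the size lemma cannot simply transfer the unweighted energy argument, since $L^2(w)$ orthogonality of Walsh packets fails; the paper instead runs a dyadic sharp function estimate bounding $(S_{\mathbf D} f)^\sharp$ by $M_2 f$ in the style of Rubio de Francia, which serves as the phase-plane substitute for a good-$\lambda$ argument, and any completion of your sketch would need an analogous device.
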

The simpler endpoint case $r=\infty$ of Theorem~\ref{t.maincor} is the Walsh-Fourier analogue of a theorem of Hunt and Young \cite{MR0338655} (cf. \cite{MR2115460} for extensions to more generalized settings). For $r<\infty$, the estimate \eqref{e.maincor} gives more quantitative information about the convergence rate of Walsh--Fourier series. 

Theorem~\ref{t.maincor} is a consequence of the following more general theorem:
\begin{theorem}\label{t.main} Let $1 < p < \infty $ and $w\in A_q$ for some $q\in [1,p)$. Then for $r \in (2q,\infty]$ such that $1/r < 1/q-1/p$, it holds that 
\begin{equation}\label{e.main}\|\sup_{M, N_0<\dots<N_M} (\sum_{j=1}^M |S_{N_j} f -  S_{N_{j-1}}f|^r)^{1/r}\|_{L^p(w)} \le C \|f\|_{L^p(w)}  
\end{equation}
for some constant $C$ depending only on $w$, $p$, $q$, $r$. 
\end{theorem}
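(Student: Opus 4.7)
The plan is to adapt the Walsh phase-plane/tile-decomposition approach used in the proof of the unweighted variation-Carleson theorem of Oberlin--Seeger--Tao--Thiele--Wright, incorporating the weighted machinery familiar from the Walsh analogue of Hunt--Young, with the weighted L\'epingle inequality as the crucial new ingredient. First, I would linearize the $V^r$-norm on the left of \eqref{e.main}: by measurable selection (or a discrete reduction), one chooses measurable functions $N_0(x)<N_1(x)<\dots<N_{M(x)}(x)$ so that the estimate reduces to controlling
\[
\Bigl\|\Bigl(\sum_{j} |S_{N_j(\cdot)}f - S_{N_{j-1}(\cdot)}f|^r\Bigr)^{1/r}\Bigr\|_{L^p(w)}.
\]
Next, expand each partial sum as a sum of Walsh wave packets (tiles), and split the contribution to each jump $S_{N_j(x)}f-S_{N_{j-1}(x)}f$ into (i) tiles that straddle a dyadic frequency scale separating $N_{j-1}(x)$ from $N_j(x)$ and (ii) tiles lying entirely inside one dyadic frequency band. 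Part (i) is a Walsh--Haar martingale difference at $x$, and I would control its $V^r$-norm in $L^p(w)$ by invoking a weighted extension of L\'epingle's martingale inequality; this is the source of the hypothesis $r>2q$, since $2q$ is the natural threshold for $V^r$ L\'epingle on $L^p(w)$ with $w\in A_q$. Part (ii) is a single-scale object that I would organize into trees.

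For the tree bookkeeping I would use the standard Walsh phase plane language of size, density, mass, and energy, but measured against the weight $w$. Each tree, after an appropriate phase twist, behaves as a Walsh martingale along its scales, so its intrinsic $V^r$ norm is controlled by applying (weighted) L\'epingle a second time. A John--Nirenberg/corona-type iteration organizes the tiles selected by the linearizers $N_j(x)$ into subcollections of controlled mass and energy, and the resulting weighted size/density estimates combine with the tree $V^r$ bound to yield the desired $L^p(w)$ estimate. The hypothesis $1/r<1/q-1/p$ provides the H\"older room needed to pass from density estimates against $w\in A_q$ to a bound in $L^p(w)$, in the spirit of the density-$\BMO$ approach to weighted Carleson operators.

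The main obstacle will be proving the correct \emph{weighted tree estimate in variational form}. Unlike the classical Carleson argument in which a tree projection yields a $\BMO$ estimate, here one needs a $V^r$-bound for the sequence of tree partial sums which is uniform in the relevant weighted size. This is precisely the step where the weighted L\'epingle inequality must be combined with a single-tree $\BMO$ estimate so that the quantitative dependence on $[w]_{A_q}$ is preserved; controlling how the implicit constants degenerate as $r\downarrow 2q$ (a feature already present in the unweighted theory) and ensuring that the density/mass iteration closes with the H\"older room $1/r<1/q-1/p$ appear to be the most delicate features of the argument.
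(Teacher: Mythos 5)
Your proposal captures the broad architecture of the paper's argument---discretize into Walsh tiles, organize into trees via size and density, apply a weighted L\'epingle inequality to a single-tree variation object, and iterate---but there are two concrete issues, one of which is a genuine gap.

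The gap is that you say nothing about how the \emph{size lemma} (tree selection by size) survives the passage to $L^p(w)$. In the Lebesgue setting, the counting estimate $\sum_T |I_T| \lesssim \sigma^{-2}|F|$ for selected trees is proved by Bessel/$L^2$-orthogonality of the disjoint wave packets extracted from the selection. For a general $w\in A_q$ this orthogonality fails outright. The paper's essential new device here is to bound the weighted $L^{2q}$-norm of the disjoint-packet square function $S_{\D}f$ by the dyadic sharp maximal function estimate $(S_{\D}f)^\sharp \lesssim M_2 f$, then invoke $\|g\|_{L^{2q}(w)} \lesssim \|g^\sharp\|_{L^{2q}(w)}$ for $w\in A_{2q}\supset A_q$ and $q>1$; this is a Rubio de Francia--style argument serving as the phase-plane substitute for Hunt--Young's good-$\lambda$ comparison. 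Without some replacement for this step, the corona-style iteration you describe has no engine to make the tree counts summable, and the proof does not close. A related observation: your hypothesis $q>1$ (so that $w$ is self-improving and the sharp function estimate applies) is load-bearing at exactly this point, and the iteration also uses a good-$\lambda$ inequality to upgrade $L^1$ control of $\sum_T 1_{I_T}$ to $L^p$ control; neither of these appears in your outline.

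A lesser point: you locate the threshold $r>2q$ in the weighted L\'epingle inequality, saying ``$2q$ is the natural threshold for $V^r$ L\'epingle on $L^p(w)$.'' In fact the weighted L\'epingle inequality only requires $r>2$, independent of $q$; the $r>2q$ constraint arises from the two-sided geometric series $\sum_n 2^n \min(\sigma, 2^{-n/(2q)}\tau)\min(1, 2^{-n/r'})$ in the size/density iteration (one needs $1/(2q)+1/r'>1$, i.e.\ $r>2q$, for summability). Similarly $1/r<1/q-1/p$ comes from the same interpolation bookkeeping, not from a density-to-$L^p(w)$ embedding per se. These misattributions are not fatal, but they suggest the iteration itself was not worked out. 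Finally, the paper actually proceeds by duality and restricted weak-type interpolation on a bilinear form $B_\P(f,g)$ rather than by directly estimating the linearized sublinear operator; this structural choice is standard but worth flagging since you do not mention it.
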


To see how Theorem~\ref{t.main} implies Theorem~\ref{t.maincor}, take $1<p<\infty$ and $w\in A_p$. Note that the $A_p$ condition is an open condition, so  for some $ \epsilon>0$, there holds $w\in A_{p-\epsilon}$ (see for instance \cite{MR0293384}), and then apply Theorem~\ref{t.main} for $q=p-\epsilon$.

The Fourier  case of Theorem~\ref{t.main}, corresponding to $w\equiv 1 \in A_1$, is a theorem of Oberlin--Seeger--Tao-Thiele-Wright \cite{oberlin-et-al-walsh} (cf. \cite{oberlin-et-al}). Using this result, one can see that 
the conclusion of Theorem~\ref{t.maincor} must depend upon $ w \in A_p$.  Suppose that there is a fixed $ 0< r < \infty $ 	
and $ 1< p < \infty $,  for  which \eqref{e.maincor} holds for all $ w\in A_p$.  Using Rubio de Francia's extrapolation theorem, we see that this same 
inequality would have to hold for $ w$ being Lebesgue measure and \emph{all} $ 1 < p < \infty $.  This contradicts the (Fourier) 
examples that are in \cite{oberlin-et-al}*{Section 2}.  

The proof of Theorem~\ref{t.main} uses two main ingredients: adaptation of phase plane analysis to weighted settings, and a weighted extension of a classical variational inequality of L\'epingle (Lemma~\ref{l.weightLepingle}). The approach used in this paper is a weighted extension of the approach in \cites{oberlin-et-al, oberlin-et-al-walsh}, and in particular it is different from the elegant approach of Hunt--Young~\cite{MR0338655}, who use a good-$\lambda$ argument to upgrade the boundedness of the Carleson operator (the Fourier analogue of $S$) in the setting of Lebesgue measure to the settings of $A_p$ weights.  A naive adaptation of the 
good-$\lambda $ approach does not apply to the variational estimates for Carleson's operator. 

We became interested in new approaches towards boundedness of Walsh--Fourier series in weighted settings while investigating questions related to weighted bounds for multilinear oscillatory operators, such as the bilinear Hilbert transform (whose boundedness in the Lebesgue setting is well-known from the work of Lacey and Thiele \cites{MR1783613,MR1689336}).  To the knowledge of the authors, there hasn't been any adaptation of the Hunt--Young approach to the setting of  multilinear oscillatory operators.\footnote{We would like to point out that Xiaochun Li \cite{XiaochunLi} has some unpublished results about weighted estimates for the bilinear Hilbert transform.} Standard approaches towards multilinear oscillatory operators (started with Lacey--Thiele \cite{MR1783613} and further developed by Muscalu--Tao--Thiele \cites{MR2127984, MR1952931, MR2127985, MR2221256}) require detailed analysis on the phase plane, and this motivates us to consider a weighted adaptation of the time-frequency analysis framework. 

In this paper, we only consider analysis on the Walsh phase plane, which is certainly easier than the Fourier case, although there are qualitative similarities between the two phase planes. Extension of the argument in this paper to the Fourier setting is a nontrivial task. In the weighted setting, there is a lack of $L^2$ orthogonality for Walsh packets, therefore some changes are needed in the way one proves the so-called size lemma. In fact, we will use a sharp function estimate similar to an argument of Rubio de Francia in \cite{MR850681}, one can view this as a substitute for the good-$\lambda$ argument of Hunt--Young in the phase-plane. Our proof of Theorem~\ref{t.main} requires a weighted extension of  the L\'epingle inequality for variation norms and this is proved in Lemma~\ref{l.weightLepingle}.

\section{Walsh functions and Walsh packets}\label{s.walsh}

We recall standard properties of Walsh functions and Walsh packets below. A good reference is \cite{ThielePhDThesis}. The Walsh functions $W_0(x), W_1(x),\dotsc$ are supported in $[0,1]$ and can be defined recursively by $ W_0(x) = 1_{[0,1)}$, and 
for even and odd integers, 
\begin{gather*}
W_{2n}(x) = W_n(2x)1_{[0,\frac 12)} + W_n(2x-1)1_{[\frac 1 2, 1)} \qquad n\ge 1  
\\
W_{2n+1}(x) = W_n(2x)1_{[0,\frac 12)} - W_n(2x-1)1_{[\frac 1 2, 1)}  \qquad n\ge 0\,.
\end{gather*}

A dyadic rectangle in $ \mathbb R _+ \times \mathbb R _+$, with area one,  is referred to as a \emph{tile}. 
The Walsh packet associated with a tile 
$$p=[2^jm, 2^j(m+1))\times [2^{-j}n, 2^{-j}(n+1)) \  \equiv  I_p \times \omega_p$$ 
is an $L^2$ normalized function supported in the spatial interval $I_p :=[2^jm, 2^j(m+1))$ and is defined by
$$\phi_p(x) = 2^{-j/2} W_n(2^{-j}x-m) \,.$$

For two tiles $p_1$ and $p_2$ such that $p_1\cap p_2 \ne \emptyset$, we say that $p_1 < p_2$ if $I_{p_1} \subset I_{p_2}$. This clearly implies $\omega_{p_1}\supset \omega_{p_2}$, furthermore there is a close connection between the partial order 
and orthogonality.  Two tiles $ p_1$ and $ p_2$ are not ordered under `$ <$' if and only if the tiles do not intersect in the plane if and only if $ \langle \phi _{p_1}, \phi _{p_2} \rangle=0$.   

A dyadic rectangle of area 2 is referred to as a \emph{bitile}.  We will denote these as capital letters, like $ P$. 
There is an analog of the  partial order `$ <$' on tiles for bitiles, and we will use the same notation for it.   
A bitile $ P$ can be divided into two tiles having separate frequency intervals, a lower tile denoted by $P_1$  and an upper tile by $P_2$.
We say that $ P_1$ and $ P_2$ are \emph{siblings.}

The following property of Walsh packets is standard and has been used implicitly in various work on analysis of the Walsh phase plane (cf. \cites{MR2127984, MR1952931}). We formulate this property below and sketch a proof for the convenience of the reader, and since we will use it several times.

\begin{lemma}\label{l.walshtohaar} Suppose that two tiles $p$ and $p'$ are siblings and $q$ is another tile such that $p<q$. Let $I$ be the common time interval of $p$ and $p'$. Then there exist two constants $c_{p,q}$ and $c_{p',q}$ such that
$$\phi_p(x) = c_{p,q} 1_I(x) \phi_q(x) $$
$$\phi_{p'}(x) = c_{p',q} |I|^{1/2} h_I(x) \phi_q(x)$$
where $h_I$ is the Haar function associated with the dyadic interval $I$.
\end{lemma}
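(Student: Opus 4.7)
The plan is to derive both identities directly from the recursive definition of the Walsh functions. The key auxiliary fact I would establish first is a restriction-and-rescaling identity on $[0,1)$: for every integer $n \ge 0$ and every dyadic subinterval $A = [l\cdot 2^{-K}, (l+1)\cdot 2^{-K})$ of $[0,1)$, there is a sign $\varepsilon = \varepsilon(n,l,K) \in \{\pm 1\}$ with
\[
W_n(y) \;=\; \varepsilon \cdot W_{\lfloor n/2^K \rfloor}\bigl(2^K y - l\bigr), \qquad y \in A.
\]
This is proved by induction on $K$: the base case $K=0$ is tautological, and the recursion gives $W_n(y) = W_{\lfloor n/2\rfloor}(2y)$ on $[0,1/2)$ and $W_n(y) = (-1)^{n\bmod 2}\,W_{\lfloor n/2\rfloor}(2y-1)$ on $[1/2,1)$, so restricting to a dyadic subinterval of level $K$ reduces to level $K-1$.

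Write $|I_q| = 2^J$, $|I| = 2^j$, and put $K := J-j \ge 0$. The image of $I$ under $x \mapsto 2^{-J}x - m_q$ is a dyadic subinterval of $[0,1)$ of length $2^{-K}$, with some index $l$, and a direct computation gives $2^K(2^{-J}x - m_q) - l = 2^{-j}x - m$. Applying the auxiliary identity to $\phi_q(x) = 2^{-J/2} W_{n_q}(2^{-J}x - m_q)$ on $I$ therefore yields
\[
\phi_q(x) \;=\; \varepsilon \cdot 2^{-K/2} \cdot 2^{-j/2}\, W_{n_0}(2^{-j}x - m), \qquad x \in I,
\]
with $n_0 = \lfloor n_q/2^K \rfloor$. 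The containment $\omega_q \subset \omega_p$ of dyadic frequency intervals of lengths $2^{-J}$ and $2^{-j}$ forces $n_p = \lfloor n_q/2^K \rfloor$ by routine dyadic arithmetic, so $n_0 = n_p$ and hence $\phi_q = \varepsilon\cdot 2^{-K/2}\,\phi_p$ pointwise on $I$. Since $\phi_p$ is supported in $I$, this gives the first identity with $c_{p,q} = \varepsilon\cdot 2^{K/2}$.

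For the second identity I will use the multiplicative property $W_m\,W_n = W_{m\oplus n}$ on $[0,1)$ (where $\oplus$ is bitwise XOR), which is again immediate by induction on $\max(m,n)$ from the recursion. Since $p$ and $p'$ are siblings at scale $2^{-j}$, their frequency labels are $\{2k,2k+1\}$ for some $k$, so $n_{p'} = n_p \oplus 1$. Therefore on $I$,
\[
\phi_{p'}(x) \;=\; 2^{-j/2}\, W_{n_p \oplus 1}(2^{-j}x - m) \;=\; \phi_p(x)\cdot W_1(2^{-j}x - m),
\]
and $W_1(2^{-j}x - m)$ equals $+1$ on the left half of $I$ and $-1$ on the right, i.e.\ it equals $|I|^{1/2} h_I(x)$. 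Combining with the first identity (and noting that both sides vanish off $I$) gives the second identity, with $c_{p',q} = c_{p,q}$.

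The proof is entirely routine; there is no genuine obstacle. The only bookkeeping that requires care is the induction producing the sign $\varepsilon$ and the identification $n_p = \lfloor n_q/2^K\rfloor$, but neither sign ever has to be evaluated explicitly, since the lemma only asserts existence of constants.
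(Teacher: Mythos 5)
Your proof is correct and takes essentially the same route as the paper's sketch: the restriction--rescaling identity you prove by induction on $K$ is the paper's inductive reduction to $|I_q| = 2|I_p|$ spelled out in full, and the multiplicative property $W_m W_n = W_{m \oplus n}$ specializes (with $n=1$) to the paper's observation $W_{2n+1} = W_{2n}\,h_{[0,1)}$, with the sibling relation $n_{p'} = n_p \oplus 1$ supplying the ``appropriate scaling and modulation'' left implicit there.
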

Note that one can easily compute the absolute values of $c_{p,q}$ and $c_{p',q}$:
$$|c_{p,q}| = |c_{p',q}| = \frac{|I_q|^{1/2}}{|I|^{1/2}} \,.$$
\begin{proof}
[Sketch of proof] For the first property, by induction one can assume $|I_q| = 2|I_p|$, in which case it follows from the recursive definition of $W_n$ (cf. \cite{ThielePhDThesis}). For the second property, note also that by definition $W_{2n+1}(x) = W_{2n}(x) h_{[0,1)}(x)$, so after appropriate scaling and modulation, it is clear that there is some constant $\alpha \in \{-1,1\}$ such that
$$\phi_{p'}(x) = \alpha |I|^{1/2} h_I(x) \phi_p(x) \,.$$
\end{proof}
  
\section{Discretization}

For any collection $\P$ of bitiles and any $r\in [1,\infty)$, let
$$C_{r,\P} f(x) := \sup_{M, N_0<\dots<N_M} \Big(\sum_{j=1}^M |\sum_{P \in \P} \langle f,\phi_{P_1}\rangle  \phi_{P_1}(x) 1_{\{N_{j-1} \not\in \omega_{P}, \ N_j \in \omega_{P_2}\}}|^r\Big)^{1/r} \,.$$
A symmetric variant of $C_{r,\P}$ can be obtained by using the limiting conditions $\{N_{j-1}\in \omega_{P_1}, N_j \not\in \omega_P\}$ in the above expression.

In the rest of the paper we'll always assume that $r<\infty$ and $r>2q$, and $q>1$. These assumptions are without loss of generality.

Via a standard argument (cf. \cite{ThielePhDThesis}), Theorem~\ref{t.main} follows from the following theorem and its symmetric variant (whose proof is completely analogous).
\begin{theorem}\label{t.discretizedmain} There is a constant  $ C = C(w, p,  q r)>0$ such that for any collection $\P$ of bitiles we have
\begin{equation}\label{e.discretizedmain}
\|C_{r,\P}f\|_{L^p(w)} \le C \|f\|_{L^p(w)} \,
\end{equation}
for all $p \in (q,\infty)$  such that $1/r>1/q-1/p$.
\end{theorem}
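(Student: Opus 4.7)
The plan is to adapt the phase-plane decomposition of Oberlin--Seeger--Tao--Thiele--Wright~\cite{oberlin-et-al-walsh} to the weighted setting. First I linearize the variation norm by choosing measurable $M(x)$, $N_0(x)<\dots<N_{M(x)}(x)$, and coefficients $a_j(x)$ with $\sum_j|a_j(x)|^{r'}\le 1$, so that $C_{r,\P}f(x)$ is realized as $\sum_{j,P}a_j(x)\<f,\phi_{P_1}\>\phi_{P_1}(x)1_{\{N_{j-1}\not\in\omega_P,\,N_j\in\omega_{P_2}\}}$. Since the hypothesis $1/r>1/q-1/p$ is open in $(p,q)$, it suffices to prove a restricted weak-type bound: for any measurable $F,G$ of finite $w$-measure there should be a truncation $g$ of $1_G$ with $|\<C_{r,\P}1_F,g\>|\lesssim w(F)^{1/p}w(G)^{1/p'}$.

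For the tile-decomposition step, introduce weighted analogues of density and mass attached to a tree $T\subset\P$ with top bitile $P_T$. Roughly, $\density_w(T)$ controls $w(F\cap I_T)/w(I_T)$ (inflated by a suitable weighted dyadic maximal function), and $\mass_w(T)$ is the analogous dual quantity for $g$. A greedy selection algorithm decomposes $\P=\bigsqcup_n\mathcal{F}_n$ into forests so that every tree $T\in\mathcal{F}_n$ has both $\density_w(T)$ and $\mass_w(T)$ of order $2^{-n}$, while $\sum_{T\in\mathcal{F}_n}w(I_T)\lesssim 2^{n\sigma}(w(F)+w(G))$ for an appropriate $\sigma$. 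In the Lebesgue case the top-measure bound follows from Bessel's inequality applied to Walsh packets; since Walsh packets are no longer orthogonal under $w\,dx$, I would replace Bessel by a sharp-function estimate in the spirit of Rubio de Francia~\cite{MR850681}, as the introduction suggests.

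The single-tree estimate is where the weighted L\'epingle inequality enters. For a tree $T$ with top $P_T$, Lemma~\ref{l.walshtohaar} rewrites each Walsh packet $\phi_{P_1}$ appearing in $C_{r,T}f$ as $\phi_{P_T}$ times a dyadic Haar function on $I_T$. Consequently the summands of the variational sum, for $x\in I_T$, can be identified pointwise with $\phi_{P_T}(x)$ times successive increments of a dyadic martingale associated to a partition of $I_T$. The weighted L\'epingle inequality (Lemma~\ref{l.weightLepingle}) then bounds the $r$-variation of this martingale in $L^p(w,I_T)$ by the weighted square function of the underlying martingale; tracking density and mass, this yields $\|C_{r,T}f\|_{L^p(w)}\lesssim \density_w(T)^{\alpha}\mass_w(T)^{\beta}w(I_T)^{1/p}$ for suitable $\alpha,\beta>0$. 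Here the hypothesis $r>2q$ is used precisely to invoke the weighted L\'epingle.

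Summing the single-tree estimate over $T\in\mathcal{F}_n$ using the top-measure bound produces a contribution of order $2^{-n\gamma}w(F)^{1/p}w(G)^{1/p'}$, where $\gamma>0$ is forced by the gap $1/r<1/q-1/p$; summing in $n$ yields the required restricted weak-type bound, and hence~\eqref{e.discretizedmain}. I expect the main obstacle to be the forest/size step: the absence of $L^2$ orthogonality of Walsh packets with respect to $w\,dx$ means the standard Bessel-style counting argument must be replaced by a weighted sharp-function inequality, and one has to track carefully how the $A_q$ constant of $w$ propagates into the exponents $\alpha,\beta,\sigma,\gamma$ so that the final geometric sum still converges under the stated exponent restrictions.
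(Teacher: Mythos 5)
Your outline follows the paper's proof almost step for step: linearize the variation norm, reduce by restricted weak-type interpolation to estimating the bilinear form $B_{\P}(f,g)$ on sets $F,G$, decompose $\P$ into forests of comparable \emph{size} and \emph{density} (what you call ``density'' is the paper's size, what you call ``mass'' is its density), replace the Bessel-type counting argument with a Rubio de Francia sharp-function estimate because Walsh packets lose $L^2(w)$-orthogonality, and then run a single-tree estimate combining Lemma~\ref{l.walshtohaar} with the weighted L\'epingle inequality before summing the forests. Two small inaccuracies worth noting: the Haar/martingale identification $\phi_{P_1}=\text{const}\cdot|I_P|^{1/2}h_{I_P}\phi_{P_T}$ holds only for the $2$-overlapping part of a tree, and the $1$-overlapping part must be handled separately by the disjointness of the upper tiles $P_2$ (no L\'epingle needed there); also, the restriction $r>2q$ is not what makes the weighted L\'epingle inequality applicable (that only needs $r>2$) but rather what makes the final two-sided geometric series over the forest scales converge, i.e.\ it enters in the exponent bookkeeping $\frac{\alpha}{2q}+\frac{\beta}{r'}>1$ at the end of the proof of Proposition~\ref{p.restrictedweaktype}.
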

By duality  (cf. \cite{oberlin-et-al}), it suffices to show \eqref{e.discretizedmain} for the following linearized variant of $C_{r,\P}$ (we'll omit the dependence on $r$ for simplicity):
$$(C_{\P}f)(x)  = \sum_{j=1}^{M(x)}\sum_{P \in \P} \langle f,\phi_{P_1}\rangle  \phi_{P_1}(x) 1_{\{N_{j-1}(x) \not\in \omega_{P}, \ N_j(x) \in \omega_{P_2}\}}a_j(x) \ \ ,$$
$$\sum_{j=1}^{M(x)} |a_j(x)|^{r'} =  1 \ \ .$$
In the following, we denote $a_P(x) = \sum_{j=1}^{M(x)} a_j(x)1_{\{N_{j-1}(x) \not\in \omega_{P}, \ N_j(x) \in \omega_{P_2}\}}$.  Let
$$B_{\P}(f,g) :=  \sum_{P\in \P} \langle f,\phi_{P_1}\rangle  \langle \phi_{P_1} \, a_P \, , \, gw\rangle  \,.$$ 
Also, denote $ w(G) := \int_G w(x) dx$ for any set $ G$. We say that $K \subset G$ is a major subset of $G$ if $w(K) > w(G)/2$ and we say it has full measure if $w(K)=w(G)$. We'll show that
\begin{proposition}\label{p.restrictedweaktype} Let $F$ and $G$ be two sets with $w(F)$, $w(G)<\infty$. Then there exists $\widetilde{F}$ and $\widetilde{G}$,  major subsets of $F$ and $G$ respectively such that:\\
(i) at least one of them has full measure, and\\
(ii) for any $ \lvert  f\rvert  \le 1_{\widetilde F}$ and  $|g| \le 1_{\widetilde G}$ and any collection of bitiles $\P$ we have
\begin{equation}\label{e.restrictedweaktype}
B_{\P}(f, g) \le C w(F)^{1/p} w(G)^{1-1/p} \, \, 
\end{equation}
for all $p \in (q,\infty)$  such that $1/r>1/q-1/p$.
\end{proposition}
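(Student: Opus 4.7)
The plan is to follow the phase-plane mass/energy strategy of \cite{oberlin-et-al-walsh} adapted to the weighted setting, substituting the weighted L\'epingle inequality (Lemma~\ref{l.weightLepingle}) for its classical counterpart and, in the counting step, a Rubio de Francia style sharp-function estimate for the $L^2$-orthogonality of Walsh packets that is unavailable in $L^2(w)$.

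\emph{Step 1: exceptional set and choice of major subsets.} After a symmetry reduction I may assume $w(G)\ge w(F)$. I would set
\[
\Omega := \bigl\{ x : M_w 1_F(x) > C\, w(F)/w(G) \bigr\},
\]
where $M_w$ is the weighted dyadic maximal operator and $C$ is chosen so that the $A_q$-boundedness of $M_w$ forces $w(\Omega) \le w(G)/2$. Taking $\widetilde G := G\setminus \Omega$ and $\widetilde F := F$ gives (i) with $\widetilde F$ of full measure. The key consequence for (ii) is that, after restricting $g$ to $\widetilde G$, only bitiles $P$ with $I_P\not\subset\Omega$ can contribute to $B_\P(f,g)$, and for such $P$ one has the density bound $w(I_P\cap F)/w(I_P) \lesssim w(F)/w(G)$. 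This ``density on $F$'' is what ultimately yields the factor $w(F)^{1/p} w(G)^{1-1/p}$.

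\emph{Step 2: tree decomposition and tree estimate.} I would organize $\P$ into a disjoint union of trees by iteratively peeling off subcollections $\P_n$ whose trees have dyadic mass and energy sizes $\sim 2^{-n}$; here $\mass(\T)$ is modelled on $\sup_{P\in\T}|\langle f,\phi_{P_1}\rangle|^2/|I_P|$, while $\energy(\T)$ is a $V^r$-variational analogue of the usual $L^2$-energy of the linearizing functionals $a_P$. The target tree estimate is
\[
|B_\T(f,g)| \lesssim \mass(\T)\, \energy(\T)\, w(I_\T),
\]
where the mass factor is controlled by the density bound from Step 1 and the energy factor by applying Lemma~\ref{l.weightLepingle} to the martingale generated by the nested frequency subdivisions inside $\T$, using Lemma~\ref{l.walshtohaar} to identify the relevant tree sums with Haar/martingale differences on the top spatial interval $I_\T$. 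Summing the tree estimates geometrically in $n$ and interpolating between the $L^q(w)$-input (made possible by the hypothesis $r>2q$) and a trivial $L^\infty$-type bound produces \eqref{e.restrictedweaktype} in precisely the range $1/r > 1/q - 1/p$.

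\emph{Main obstacle.} The delicate part is the packing estimate $\sum_{\T\in \P_n} w(I_\T)\lesssim 2^{\alpha n}\, w(F)$ needed to sum the tree bounds. In the unweighted framework this is a Bessel-type bound coming from $L^2$-orthogonality of the Walsh packets $\phi_{P_1}$, and there is no direct weighted analogue. Following the authors' roadmap, I would replace it with a sharp-function estimate à la Rubio de Francia \cite{MR850681}: dominate the sharp function of the tree-sum $\sum_{P\in\T}\langle f,\phi_{P_1}\rangle\phi_{P_1}$ pointwise by a weighted dyadic maximal function, then apply the weighted Fefferman--Stein inequality to convert this into an $L^q(w)$ Carleson-type packing bound. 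Producing a sufficiently clean version of this sharp-function estimate, uniform in the variational parameter $r$ and with constants depending only on $[w]_{A_q}$, is where the bulk of the new work in the proof lies.
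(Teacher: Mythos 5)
Your overall strategy is correct for one of the two cases, and it matches the paper's: remove an exceptional set defined by the weighted maximal function, decompose into trees of controlled size and density, estimate each tree using Lemmas~\ref{l.walshtohaar} and \ref{l.weightLepingle}, and replace the unweighted Bessel/counting step by a sharp-function estimate in the style of Rubio de Francia (this is exactly what Lemma~\ref{l.treeselect-bysize} does). However, there is a genuine gap at the very start.

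The ``symmetry reduction'' to $w(G)\ge w(F)$ is not available. The form $B_\P(f,g)=\sum_{P}\langle f,\phi_{P_1}\rangle\langle \phi_{P_1}a_P,\,gw\rangle$ is not symmetric under interchanging $f$ and $g$: $f$ enters through the Walsh coefficients $\langle f,\phi_{P_1}\rangle$ and controls size, while $g$ enters together with the linearizing coefficients $a_P$ and controls density; these two quantities are structurally different objects. Correspondingly, the paper splits into two genuinely different cases. When $w(F)\le w(G)$, your construction (remove $\{M_w 1_F > Cw(F)\}$ from $G$, keep $\widetilde F=F$, use Lemma~\ref{l.size-bound} to get $\size(\P)\lesssim w(F)^{1/q}$, then sum the tree estimates in a two-sided geometric series) is exactly the paper's Case 1. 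But when $w(F)>w(G)$, the exceptional set is $\{M_w 1_G > Cw(G)\}$ and is removed from $F$; the leading small parameter is then a density bound $\density(\P)\lesssim w(G)^{1/r'}$ rather than a size bound, and the summation requires the extra redecomposition and $L^p$/$L^1$ interpolation for the counting function $\sum_T 1_{I_T}$ supplied by Lemma~\ref{l.tree-redecomp}, together with taking $p$ arbitrarily large. Your proposal does not describe this case at all, and one cannot get it for free from the first case. Note also that the statement's clause ``at least one of $\widetilde F,\widetilde G$ has full measure'' is precisely reflecting the fact that which one has full measure depends on which of $w(F),w(G)$ is larger.

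A secondary point worth flagging: the quantity you call ``mass'' is a single-tile supremum $\sup_{P\in T}|\langle f,\phi_{P_1}\rangle|^2/|I_P|$, but in the weighted setting the correct tree quantity (the paper's $\size$) must be the tree-level $L^2(w)$ average of the square function $S_T f$. In the Lebesgue case the two agree up to constants thanks to $L^2$-orthogonality of Walsh packets, but this equivalence breaks for general $w$, and the tree-level definition is what makes the BMO characterization (Lemma~\ref{l.BMOcharacterize}), the size lemma (Lemma~\ref{l.size-bound}), and the sharp-function counting argument (Lemma~\ref{l.treeselect-bysize}) work. Also, the sharp-function/packing step does not involve the variational parameter $r$ at all --- it is a pure square-function estimate --- so the worry about uniformity in $r$ there is misplaced; $r$ only enters the tree estimate through L\'epingle.
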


Via restricted weak-type interpolation, the above proposition implies
$$B_{\P}(f,g) \le C \|f\|_{L^p(w)} \|g\|_{L^{p'}(w)} \, \qquad   p > q $$
where $C$ depends only on $p$,$q$,$r$,$w$ and there is no restriction on $f$ or $g$, and this in turn implies Theorem~\ref{t.discretizedmain}.  It remains to show Proposition~\ref{p.restrictedweaktype}.

\section{Decomposition of {\bf P}}
To prove Proposition~\ref{p.restrictedweaktype}, as is now standard, $\P$ will be decomposed into more refined subcollections, so that the bilinear sum $B$ associated with each such subcollection can be estimated more effectively. For this purpose, two standard measurements, \emph{size} and \emph{density}, are associated with each  collection. In this section we formulate our weighted adaptations of these notions.

To formulate size, we first recall the definition of trees.
\begin{definition}[Tree]Let $P_T$ be a bitile. A tree $T$ with tree top $P_T$ is a finite collection of bitiles such that $P < P_T$ for any $P\in T$. 
\end{definition}
Writing $P_T = I_T \times \omega_T$, we will refer to $I_T$ as the top interval of $T$. A tree is called $1$-overlapping if the lower tile $P_1$ of every $P\in T$ is less than the lower tile of the tree top. Similarly, a tree is $2$-overlapping if every upper tile $P_2$ is less than the upper tile of the tree top. Clearly any tree can be decomposed into two trees, one of each type.

In the following, let
$S_Tf(x)  := \Bigl[\sum_{P\in T} |\langle f,\phi_{ P_1}\rangle |^2\frac{1_{I_P}}{|I_P|} \Bigr] ^{1/2}  $.

\begin{definition}[Size] 
The size of a collection of bitiles $\P$ is the best constant $C$ such that: for any $2$-overlapping tree $T \subset \P $ we have
$$\Bigl\|S_T f\Bigr\|_{L^{2}(w)} \le C w(I_T)^{\frac 1{2}} \ \ .$$
We will denote the size of $\P$ by $\size(\P)$. 
\end{definition}
It is clear that for $w\equiv 1$ one recovers the standard definition of size (cf. \cites{MR1689336,MR2127984}).

\begin{definition}[Density]
The density of a collection $\P$ of bitiles is
$$\density(\P):= \sup_{P\in \P} \sup_{Q>P} \Big(\frac{1}{w(I_{Q})} \int_{I_{Q}} |g(x)|^{r'}\sum_{k: N_k(x)\in \omega_{Q}} |a_k(x)|^{r'}w(x)dx \Big)^{1/r'}\,.$$
\end{definition}
Since $|g|\le 1_G$. it is clear that the density of any collection is bounded above by $1$.

\subsection{Size bounds} In this section, we show the following bound, which is a variant of \cite{MR2127984}*{Lemma 4.5}.
\begin{lemma}\label{l.size-bound}  If $w$ is in $A_q$ then
\begin{equation}\label{e.size-bound}
\size(\P) \le C \sup_{P\in \P} \Big(\frac{w(I_P \cap F)}{w(I_P)}\Big)^{1/q} \,.
\end{equation}
\end{lemma}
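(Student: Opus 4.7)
The plan is to reduce the estimate for $S_T f$ to one for a dyadic Haar square function via Lemma~\ref{l.walshtohaar}, and then control the latter via a sharp-function argument in the spirit of Rubio de Francia~\cite{MR850681}.

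\emph{Reduction.} First, I would fix a 2-overlapping tree $T$ with top $P_T$. Since $P_2 < P_{T,2}$ for each $P \in T$, Lemma~\ref{l.walshtohaar} applied with $p = P_2$, $p' = P_1$, $q = P_{T,2}$ gives $\phi_{P_1}(x) = \epsilon_P\,h_{I_P}(x)\,\sigma(x)$ for some $\epsilon_P \in \{-1,+1\}$, where $\sigma := |I_T|^{1/2}\phi_{P_{T,2}}$ satisfies $|\sigma| = 1_{I_T}$. Setting $\widetilde f := f\sigma$ (so $|\widetilde f| \le 1_{F \cap I_T}$), one has $\langle f,\phi_{P_1}\rangle = \epsilon_P\langle \widetilde f, h_{I_P}\rangle$, and
\[
(S_T f)(x)^2 \;=\; \sum_{P\in T}|\langle\widetilde f, h_{I_P}\rangle|^2 \,\frac{1_{I_P}(x)}{|I_P|} \;=: G(x).
\]
Thus the size bound reduces to an $L^2(w)$-estimate for a dyadic Haar square function on the intervals $\{I_P : P \in T\}$ applied to $\widetilde f$.

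\emph{Sharp-function bound.} Next, for any dyadic $I \subset I_T$ I would split $G = c_I + G_I$ on $I$, where $c_I := \sum_{P\in T,\,I\subsetneq I_P}|\langle\widetilde f, h_{I_P}\rangle|^2/|I_P|$ is constant on $I$ and $G_I \ge 0$ collects the contributions from $P \in T$ with $I_P \subset I$. Haar orthogonality applied to $\widetilde f\,1_I$ gives
\[
\frac{1}{|I|}\int_I G_I\,dy \;=\; \frac{1}{|I|}\!\!\sum_{P \in T,\,I_P \subset I}\!\!|\langle\widetilde f, h_{I_P}\rangle|^2 \;\le\; \frac{\|\widetilde f\,1_I\|_2^2}{|I|} \;\le\; \frac{|F \cap I|}{|I|},
\]
so $M^{\#}_d G(x) \lesssim M_d 1_F(x)$ for the dyadic sharp maximal function. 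The $A_q$ inequality $|E|/|I| \lesssim [w]_{A_q}^{1/q}(w(E)/w(I))^{1/q}$ (for $E \subset I$) then gives $M^{\#}_d G \lesssim [w]_{A_q}^{1/q}(M^w_d 1_F)^{1/q}$; since $M^w_d 1_F \le \delta := \sup_{P\in\P} w(I_P \cap F)/w(I_P)$ on intervals controlled by the tree, one obtains a pointwise bound on $M^{\#}_d G$ in terms of $\delta^{1/q}$.

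\emph{Passage to $L^2(w)$ and main obstacle.} Finally, a weighted Fefferman--Stein argument adapted from Rubio de Francia---serving as a substitute for the loss of Walsh-packet $L^2$-orthogonality in the weighted setting---would convert the pointwise sharp-function bound into $\int_{I_T} G\,w \lesssim \delta^{2/q}\,w(I_T)$, equivalently $\|S_T f\|_{L^2(w)} \lesssim \delta^{1/q}\,w(I_T)^{1/2}$, which is exactly \eqref{e.size-bound}. The main obstacle is this last step: a naive Fefferman--Stein inequality directly applied to $G$ only produces $\int G\,w \lesssim \delta^{1/q}\,w(I_T)$ (and thus the suboptimal size bound $\lesssim \delta^{1/(2q)}$), so obtaining the sharp $\delta^{1/q}$ exponent requires the Rubio de Francia iteration to construct a dominating $A_1$-weight from the density data and to upgrade the pointwise sharp-function estimate into $L^2(w)$ with the correct exponent. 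This iteration also absorbs the technical issue that arbitrary dyadic subintervals of $I_T$ need not arise as $I_P$ for any $P \in \P$, so that the uniform bound $M^w_d 1_F \lesssim \delta$ on $I_T$ is not entirely automatic from the definition of $\delta$.
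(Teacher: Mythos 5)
Your reduction to a Haar square function via Lemma~\ref{l.walshtohaar} is correct, and your sharp-function computation $M^{\#}_d G\lesssim M_d 1_F$ is essentially the estimate \eqref{e.sharpM2} that the paper proves elsewhere. But the proof is not complete: the step you yourself flag as ``the main obstacle'' is exactly the step that is missing, and the tool you gesture at does not fill it. As you correctly observe, running Fefferman--Stein once on $G=(S_Tf)^2$ only yields $\int G\,w\lesssim \delta^{1/q}w(I_T)$, i.e.\ $\size\lesssim\delta^{1/(2q)}$, and this loss is intrinsic to the route: the sharp function of $G$ only sees first-power averages $|F\cap I|/|I|$, never their squares, so no pointwise bound on $M^{\#}_dG$ can be better than $\delta^{1/q}$. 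Invoking ``the Rubio de Francia iteration to construct a dominating $A_1$-weight'' is not an argument --- you do not say what weight is constructed, from what data, or how it restores the lost square root --- and in the paper that iteration-type argument is used for a different purpose (the counting bound in Lemma~\ref{l.treeselect-bysize}, where an $L^{2q}(w)$ bound with a full power of $w(F)$ is what is needed), not for the size bound.

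The missing idea is Lemma~\ref{l.BMOcharacterize}: a John--Nirenberg-type self-improvement showing that the suprema over $2$-overlapping trees of $w(I_T)^{-1/p}\|S_Tf\|_{L^p(w)}$ are comparable for all $1<p<\infty$. This lets one verify the $L^2(w)$ condition in the definition of size by instead testing in $L^q(w)$, where the exponent $1/q$ comes out for free: after your same reduction to Haar functions (plus randomization), $T_\epsilon$ is a martingale transform composed with multiplication by a unimodular factor, hence bounded on $L^q(w)$ for $w\in A_q$, giving $\|S_Tf\|_{L^q(w)}\lesssim\|f1_{I_T}\|_{L^q(w)}\le w(F\cap I_T)^{1/q}$ directly. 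Without that self-improvement (or some substitute recovering the factor of $2$ in the exponent), your argument only proves the weaker bound $\delta^{1/(2q)}$, which is not sufficient for the range of $p$ claimed in Proposition~\ref{p.restrictedweaktype}.
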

The proof of Lemma~\ref{l.size-bound} relies on the following BMO characterization of size, which is a variant of \cite{MR2127984}*{Lemma 4.2}. 
\begin{lemma}\label{l.BMOcharacterize}
For any collection $\P$ of bitiles and any $1<p<\infty$ we have
\begin{equation}\label{e.BMOcharacterize}
\sup_{T \subset \P} \frac{1}{w(I_T)^{1/p}} \|S_Tf\|_{L^{p}(w)}  \sim_p \sup_{T \subset \P} \frac{1}{w(I_T)} \|S_Tf\|_{L^{1,\infty}(w)} 
\end{equation}
the suprema are over $2$-overlapping trees.
\end{lemma}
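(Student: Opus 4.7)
The easier direction, that the $L^p(w)$-bound implies the $L^{1,\infty}(w)$-bound, is immediate from Chebyshev's inequality combined with the trivial bound $w(\{S_{T'}f > \lambda\}) \le w(I_{T'})$ (valid since $\supp S_{T'}f \subseteq I_{T'}$): writing $A$ for the left-hand supremum in~\eqref{e.BMOcharacterize}, one has, uniformly in $T'$ and $\lambda>0$,
\[
\lambda\, w(\{S_{T'}f > \lambda\}) \le \min\!\big(\lambda,\ A^p/\lambda^{p-1}\big)\, w(I_{T'}) \le A\, w(I_{T'}).
\]

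For the converse I would run a John--Nirenberg-type iteration with stopping times on a partial ``above'' square function, writing $B$ for the right-hand supremum. As a preliminary observation, applying the weak-$L^1$ hypothesis to the singleton subtree $\{P\}$ (with top $P$ itself, or its dyadic bitile parent if strict ordering is required, in which case the $A_q$-doubling of $w$ absorbs the resulting constant) yields
\[
\delta_P := \frac{|\langle f,\phi_{P_1}\rangle|^2}{|I_P|} \lesssim B^2 \qquad \text{for every } P \in \P,
\]
because $S_{\{P\}}f = \sqrt{\delta_P}\, 1_{I_P}$ and $\|c\cdot 1_{I_P}\|_{L^{1,\infty}(w)} = c\, w(I_P)$. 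This uniform single-bitile bound is what will allow the iteration to close.

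Now fix a 2-overlapping subtree $T \subset \P$ with top $P_T = I_T \times \omega_T$. For each dyadic $J \subseteq I_T$ define
\[
\sigma_J^2 := \sum_{P \in T,\ I_P \supsetneq J} \delta_P, \qquad T_J := \{P \in T : I_P \subseteq J\},
\]
so that $\sigma_J^2$ is constant on $J$ and non-decreasing as $J$ shrinks. A direct check shows that for any dyadic $J \subseteq I_T$, the collection $T_J$ is itself a 2-overlapping subtree of $\P$ with top time interval $J$, the correct top frequency being the dyadic bitile whose upper-tile frequency is the unique dyadic interval of size $1/|J|$ containing $\omega_{P_T,2}$ (this interval is contained in every $\omega_{P,2}$, $P\in T_J$, because all such $\omega_{P,2}$ are dyadic intervals containing $\omega_{P_T,2}$ and of size $\ge 1/|J|$). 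Moreover, for $x \in J$ there is the orthogonal split $S_T^2 f(x) = \sigma_J^2 + S_{T_J}^2 f(x)$. Given a threshold $\lambda \ge B$, let $\{I_k\}$ be the maximal dyadic subintervals of $I_T$ with $\sigma_{I_k} > \lambda$; then $\bigcup_k I_k = \{S_T f > \lambda\}$ by the monotonicity of $\sigma_J$, and the single-jump bound $\delta_{P_k} \le B^2 \le \lambda^2$ (where $P_k$ is the unique bitile with $I_{P_k}=I_k^+$, if present) together with the maximality $\sigma_{I_k^+}^2 \le \lambda^2$ yields $\sigma_{I_k}^2 \le 2\lambda^2$. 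Hence for $x\in I_k$ the event $\{S_T f > 2\lambda\}$ is contained in $\{S_{T_{I_k}} f > \lambda\sqrt{2}\}$, and applying the weak-$L^1(w)$ hypothesis to each $T_{I_k}$ and summing in $k$ delivers the good-$\lambda$ inequality
\[
w(\{S_T f > 2\lambda\}) \le \frac{B}{\lambda\sqrt{2}}\, w(\{S_T f > \lambda\}).
\]

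Starting from $w(\{S_T f > 2B\}) \le w(I_T)/2$ (the weak-type hypothesis at level $2B$) and iterating along $\lambda_n = 2^n B$ produces super-exponential decay $w(\{S_T f > 2^n B\}) \lesssim C^{-n^2}\, w(I_T)$, which integrates against $p\lambda^{p-1}\,d\lambda$ to give $\|S_T f\|_{L^p(w)} \lesssim_p B\, w(I_T)^{1/p}$, completing the proof. The hardest technical point is the verification that $T_{I_k}$ is a 2-overlapping subtree of $\P$ with the stated top: the natural choice of top-frequency is strictly larger than $\omega_T$, so some care is needed to pick its upper sibling small enough to lie inside every $\omega_{P,2}$ for $P \in T_{I_k}$. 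The uniform single-bitile bound $\delta_P\lesssim B^2$ then does the rest, converting what would otherwise be an uncontrolled martingale jump at each stopping scale into a tame additive constant.
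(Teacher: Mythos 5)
Your proof is correct, but it takes a genuinely different route from the paper's. The paper does not run a John--Nirenberg iteration. Instead, after assuming $\P$ finite so that the left side $\sigma := \sup_T w(I_T)^{-1/p}\|S_Tf\|_{L^p(w)}$ is a priori finite, it picks a near-extremal tree $T$, fixes a single threshold $\lambda$, sets
$\widetilde T := \{P\in T:\ I_P\subset\{S_Tf>\lambda\}\}$,
shows $\|S_{T\setminus\widetilde T}f\|_\infty\le\lambda$ (your $\sigma_J$-monotonicity argument, in disguise), and bounds $\|S_{\widetilde T}f\|_{L^p(w)}^p\le \sigma^p w(\{S_Tf>\lambda\})$ by applying the $L^p$ size hypothesis on each maximal component of $\{S_Tf>\lambda\}$. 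Plugging these into the near-extremality of $T$ and taking $\lambda\simeq\sigma$ gives $c\,\sigma\,w(I_T)\le\lambda\,w(\{S_Tf>\lambda\})$ in a single step, with no iteration, no single-jump bound, and no distribution-function integral. The trade-offs are as one would expect: the paper's one-shot self-improving comparison is shorter but leans on the qualitative finiteness of $\sigma$ and on having a tree that nearly attains the supremum; your good-$\lambda$ iteration is longer but proves the estimate directly from the weak-$L^1$ bound $B$ for an arbitrary tree $T$, with no need to locate an extremizer. Both proofs face the same minor bookkeeping question of exhibiting a bitile top for the localized subcollection $T_J$ with $|I|=|J|$; your specific prescription (``the unique dyadic interval of size $1/|J|$ containing $\omega_{P_T,2}$'') is not always the upper child of its parent and thus may not literally sit as the upper tile of any bitile over $J$, but this is easy to repair by noting that $\bigcap_{P\in T_J}\omega_{P_2}$ is a dyadic interval of length $\ge 1/|J|$, and either it equals some $\omega_{P_2}$ with $I_P=J$ (in which case that $P$ serves as top) or it has length $\ge 2/|J|$ and then contains the upper child of one of its dyadic subintervals of length $2/|J|$, which gives a valid top. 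The rest of your argument --- the singleton bound $\delta_P\lesssim B^2$ via doubling, the orthogonal split $S_T^2 f=\sigma_J^2+S_{T_J}^2 f$ on $J$, the maximality estimate $\sigma_{I_k}^2\le 2\lambda^2$, the good-$\lambda$ inequality $w(\{S_Tf>2\lambda\})\le (B/(\lambda\sqrt 2))\,w(\{S_Tf>\lambda\})$, and the integration --- is sound.
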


\begin{proof} Since $S_T f$ is supported in $I_T$, the right hand side in \eqref{e.BMOcharacterize} is clearly bounded above by the left hand side. For the other direction, one can freely assume that $\P$ is finite. Denote the left hand side of \eqref{e.BMOcharacterize} by $\sigma $, which is now finite.     

Let $T\subset \P$ be a $2$-overlapping tree such that
\begin{equation}\label{e.BMOlargesize}
\|S_Tf\|_{L^{p}(w)} \ge \frac{\sigma }{2}{w(I_T)^{\frac 1{p}}} \,. 
\end{equation}
We will show that the $ L ^{1, \infty } (w)$ norm of $ S_T f$,  tested at height $ \lambda \simeq \sigma $, dominates $w(I_T) \sigma $.  
For any dyadic interval $I$, by definition of $\sigma $ we have
$$\Bigl\|\Big(\sum_{P\in T, I_P \subset I} |\langle f,\phi_{ P_1} \rangle|^2\frac{1_{I_P}}{|I_P|} \Big)^{p/2}\Bigr\|_{L^1(w)}  \le \sigma ^{p} w(I) \,.$$
Note that the integrand on the left hand side is supported in $I$. Now, fix  $\lambda >0$ and let 
$$\widetilde T: = \{P \in T: I_P \subset  \{S_Tf > \lambda\} \} \,.$$
By dividing $\{S_Tf > \lambda\}$ into maximal dyadic components and applying the last estimate for each such interval, after summing we obtain

\begin{equation}\label{e.BMOlocal}
\|(S_{\widetilde T}f)^{p}\|_{L^1(w)} \le \sigma^{p} w(\{S_Tf > \lambda\})  
\end{equation}
On the other hand, it is not hard to see that $\|S_{ T\setminus \widetilde T}f\|_\infty \le \lambda $. Indeed, one only needs to show that for any maximal dyadic component $I$ of $\{S_Tf > \lambda\}$ and any $x\in I$ we have
$$S_{T\setminus \widetilde T}f(x) \le \lambda \ \ .$$
Let $J$ be the dyadic parent of $I$. By definition of $\widetilde T$, one can write
$$S_{T\setminus \widetilde T}f(x) = S_{T_J} f(x) \ \ , \ \ T_J := \{P \in T: J\subset I_P\} \ \ .$$
Clearly $S_{T_J}f$ is constant on $J$ and $J$ has nontrivial intersection with $\{S_Tf \le \lambda\}$. Therefore
$$S_{T_J}f(x) = \inf_{y\in J} S_{T_J}f(y) \le \inf_{y\in J} S_Tf(y) \le \lambda \ \ .$$

Since $S_Tf$ is supported inside $I_T$, using \eqref{e.BMOlargesize} and \eqref{e.BMOlocal} and the above $L^\infty$ bound, we have

$$\frac{\sigma ^{p}w(I_T)}{2^p} \le \|(S_Tf)^{p}\|_{L^1(w)} \le C \lambda^{p} w(I_T) + C\sigma ^{p} w(\{S_Tf > \lambda\}) \,.$$
Letting $\lambda =  \sigma /C$ for some large $C$, we obtain the desired estimate: for some $c>0$,
$$c \sigma  w(I_T) \le \lambda w(\{S_Tf > \lambda\}) \le \|S_Tf\|_{L^{1,\infty}(w)} \,.$$
\end{proof}

\begin{proof}[Proof of Lemma~\ref{l.size-bound} using Lemma~\ref{l.BMOcharacterize}] 

By Lemma~\ref{l.BMOcharacterize}, it suffices to show
$$\|S_T(f)\|_{L^q(w)} \le C w(I_T)^{1/q}\sup_{P\in T} \Big(\frac{w(I_P \cap F)}{w(I_P)}\Big)^{1/q}$$
for each $2$-overlapping tree $T$.  One can assume that $T$ contains its top element, in which case we will  show:
$$\|S_T(f)\|_{L^q(w)} \le C w(I_T \cap F)^{1/q} \ \ .$$
Let $(\epsilon_P)_{P\in T}$ be a random sequence of $1$ and $-1$, then it suffices to show the following uniform estimate (over $\epsilon$):
$$\|T_\epsilon f\|_{L^q(w)} \le C \|f\|_{L^q(w)}$$
$$T_\epsilon f:=\sum_{P\in T} \epsilon_P \langle f,\phi_{ P_1} \rangle\phi_{ P_1} \,.$$ 
By the $2$-overlapping property of $T$ , by Lemma~\ref{l.walshtohaar}, we can rewrite $T_\epsilon$ as
$$(T_\epsilon f)(x) =  {|I_T|} \sum_{P\in T} \epsilon_P \langle f\phi_{P_T}, h_{I_P}\rangle  h_{I_P}(x)\phi_{P_T}(x)$$
{ where $ \phi _{P_T} $ is the Walsh packet associated to the upper tile of the top of the tree.} Therefore the desired bound for $T_\epsilon$ follows from standard properties of the martingale transform (cf. \cite{MR1748283}).
\end{proof}

\subsection{Tree selection by size}
The decomposition of the collection $\P$ is done via selection of trees of comparable size and density. The following Lemma allows for selection of trees based on size. Recall that $1< q < \infty $,  and $ w \in A_q$.  
\begin{lemma}\label{l.treeselect-bysize} 
Let $\P$ be a collection of bitiles  with $ \sigma = \size (\P)< \infty $. 
Then there exists a subcollection $\P' \subset \P $ with $$\size(\P')<\sigma/2$$ 
such that $\P \setminus \P'$ can be written as a union of trees, $ \P \setminus \P' = \bigcup_{T\in \T} T$, with
$$\sum_{T\in \T} w(I_T) \le C \sigma^{-2q} w(F) \, \, . $$
The constant $ C$ depends upon $ q$ and  $[w] _{A_q} $.  
\end{lemma}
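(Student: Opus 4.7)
The approach is the standard iterative tree-selection procedure, adapted to the $A_q$-weighted setting via the two $L^q(w)$-based tools already established: the BMO characterization of size (Lemma~\ref{l.BMOcharacterize}) and the density-size comparison (Lemma~\ref{l.size-bound}). I would iterate as follows: while $\size(\P)\ge \sigma/2$, extract a $2$-overlapping tree $T^*\subset \P$ witnessing size near $\sigma/2$, enlarge it to a full tree $T := \{P\in \P : P<P_{T^*}\}$, transfer $T$ to $\T$, and delete $T$ from $\P$. To enable the crucial summation step, the top bitile $P_{T^*}$ should be chosen maximally with respect to some ordering on the frequency intervals $\omega_{T^*}$, so that later-selected tops enjoy a \emph{strong disjointness} property in phase space relative to earlier ones.

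For each selected tree $T_k$, the $L^2(w)$ size lower bound $\|S_{T_k^*}f\|_{L^2(w)}^2\ge (\sigma^2/16)\, w(I_{T_k})$ can be upgraded via Lemma~\ref{l.BMOcharacterize} to the $L^q(w)$ statement $\|S_{T_k^*}f\|_{L^q(w)}^q\ge c\sigma^q w(I_{T_k})$. On the other hand, the proof of Lemma~\ref{l.size-bound}---through the martingale-transform reformulation of Lemma~\ref{l.walshtohaar} combined with the $A_q$ boundedness of the martingale square function---shows that $\|S_{T_k^*}f\|_{L^q(w)}^q \le C\, w(I_{T_k}\cap F)$ whenever $|f|\le 1_F$. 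Combining these gives the per-tree estimate
\[ \sigma^q w(I_{T_k}) \le C\, w(I_{T_k}\cap F). \]

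Summing this over $k$ is where the overlap of the tree intervals $\{I_{T_k}\}$ enters. If these intervals were pairwise disjoint, the bound would already be the stronger $\sigma^{-q}w(F)$, so the task reduces to quantifying their overlap in the weighted sense. Exploiting the maximality rule from the selection together with a weighted Carleson/John--Nirenberg style argument based on the doubling property of $A_q$-weights, one should absorb the overlap at a cost of an additional factor $\sigma^{-q}$, producing the claimed $\sigma^{-2q} w(F)$.

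The main obstacle is precisely this overlap control. In the unweighted case, $L^2$-orthogonality of Walsh packets associated to $2$-overlapping trees yields a clean Bessel inequality that handles overlap in one stroke, producing the sharp exponent $\sigma^{-2}$. The weighted setting lacks this orthogonality; the martingale substitute furnished by Lemma~\ref{l.walshtohaar} only delivers $L^q(w)$-valued estimates, and the additional $\sigma^{-q}$ loss in the final bound is precisely the price paid for the passage from $L^2$-orthogonality to $A_q$-martingale control.
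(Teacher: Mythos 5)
There is a genuine gap in your argument, and it sits exactly where you flagged it: the overlap control. You derive a per-tree estimate $\sigma^q w(I_{T_k}) \le C\, w(I_{T_k}\cap F)$ and then propose to sum over $k$, claiming that a ``weighted Carleson/John--Nirenberg style argument'' absorbs the overlap of the intervals $\{I_{T_k}\}$ at a cost of one more factor $\sigma^{-q}$. But when you sum, the right-hand side becomes $\sum_k w(I_{T_k}\cap F)$, and controlling that sum requires a bound on the counting function $\sum_k 1_{I_{T_k}}$ --- which is exactly what the lemma is trying to establish. In the paper, the counting-function/Carleson bound (\eqref{e.BMO} in Lemma~\ref{l.tree-redecomp}) is a \emph{consequence} of the mass bound $\sum_T w(I_T) \lesssim \sigma^{-2q} w(F)$, not an independent tool one can invoke to prove it; invoking it here is circular. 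The maximality rule in the selection gives phase-space disjointness of the selected lower tiles $P_1$, not spatial almost-disjointness of the tree intervals, and there is no way to convert that disjointness into a bound on $\sum_k w(I_{T_k}\cap F)$ by elementary Carleson counting. In the unweighted proof this is exactly where the $L^2$-Bessel inequality enters, and it does so \emph{after} aggregating all selected tiles into one square function --- there is no tree-by-tree step to sum.

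There is also a secondary issue with the per-tree lower bound: Lemma~\ref{l.BMOcharacterize} asserts an equivalence of \emph{suprema} over $2$-overlapping trees, so it does not say that the particular tree $T_k^\ast$ that nearly realizes the $L^2(w)$ size also has a large normalized $L^q(w)$ norm. For $q<2$ (the typical case, with $q$ close to $1$) H\"older goes the wrong way, so $\|S_{T_k^\ast}f\|_{L^q(w)}^q\gtrsim \sigma^q w(I_{T_k})$ is not available. The paper works with $L^{2q}(w)$ instead precisely because $2q>2$ and H\"older then gives the lower bound directly.

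The paper's actual route sidesteps the overlap problem entirely. After the standard selection produces trees $T$ with $\|S_{T_2}f\|_{L^2(w)}\ge \tfrac{\sigma}{2}w(I_T)^{1/2}$ and pairwise-disjoint lower tiles $\D=\{P_1: P\in \bigcup T_2\}$, H\"older upgrades each lower bound to $L^{2q}(w)$, and the crucial move is to bound $\sum_T w(I_T)$ by a \emph{single} quantity $\sigma^{-2q}\|S_{\D}f\|_{L^{2q}(w)}^{2q}$ involving the square function over all selected tiles at once. One then proves the pointwise sharp-function estimate $(S_{\D}f)^\sharp \le C M_2 f$, using the Lebesgue-$L^2$ Bessel inequality for the disjoint tiles in $\D$ on each dyadic interval $J$ (no weighted orthogonality is needed), and concludes via the $A_q$-weighted Fefferman--Stein inequality $\|g\|_{L^{2q}(w)}\lesssim \|g^\sharp\|_{L^{2q}(w)}$ and boundedness of $M_2$ on $L^{2q}(w)$ for $w\in A_q$. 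This is the substitute for the good-$\lambda$ step: the weight never sees the lack of $L^2(w)$-orthogonality because all the orthogonality used is Lebesgue, and the $A_q$ structure enters only through the sharp function inequality. Your plan, by contrast, tries to apply the weight tree-by-tree, and that is precisely where the argument breaks.
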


This proof, especially the appeal to the sharp function below,  is much easier to complete in the Walsh setting. { We note that the usual approach (cf. \cite{MR1783613}) relies on some orthogonality of the packets in $L^2$, and this is not necessarily true for non-Lebesgue weights $w$.}
Our proof strategy for Lemma~\ref{l.treeselect-bysize} is derived from Rubio de Francia's argument \cite{MR850681}.

\begin{proof} 
By the standard selection algorithm (cf. \cite{MR1783613} or \cite{MR2127984} which is closer to the dyadic setting of this paper), one can find a collection of trees  $\T$ such that the following conditions hold.  Each $T \in \T$ contains a $ 2$-overlapping tree $ T_2$ such that 
\begin{gather*} 
\|S_{T_2}f\|_{L^{2}(w)} \ge \frac \sigma 2 \omega(I_T)^{\frac 1{2}}   \qquad    T \in  \T  
\\
\size(\P \setminus \bigcup_{T\in \T} T) < \sigma/2 \,. 
\end{gather*}
Furthermore, the selection algorithm ensures that the tiles in the collection  
$\D:=\{P_1: P\in \bigcup_{T\in \T} T_2\}$ are pairwise disjoint tiles in the phase plane.

It remains to bound the sum over $T\in \T$ of $w(I_T)$'s. Using H\"older's inequality, we have 
$$\|S_{T_2}f\|_{L^{2q}(w)} \ge \frac \sigma 2 \omega(I_T)^{\frac 1{2q}}$$
therefore 
\begin{align*}
\sum_{T\in \T} w(I_T) &\le C\sigma^{-2q} \Bigl\|\sum_{T\in \T} (S_{T_2}f)^{2q}\Bigr\|_{L^1(w)}
\\
&\le C\sigma^{-2q} \Bigl\|\Big(\sum_{T\in \T} (S_{T_2}f)^{2}\Big)^{q}\Bigr\|_{L^1(w)}
\\
&= C\sigma^{-2q} \Bigl\|\Big(\sum_{p\in \D} |\langle f,\phi_p\rangle |^2\frac{1_{I_p}}{|I_p|}\Big)^{1/2}\Bigr\|_{L^{2q}(w)}^{2q} \,.
\end{align*}
Let $S_{\D}f$ denote the square sum inside the  last $L^{2q}$ norm. We will  show
\begin{equation}\label{e.sharpM2}
(S_{\D}f)^{\sharp} \le C M_2f  
\end{equation}
where the left hand side is the dyadic sharp maximal function of $S_{\D}f$, and
$$M_2f(x) := \sup_{I: x\in I} \Bigl(\frac{1}{|I|} \int_I |f(x)|^2dx\Bigr)^{1/2} \,.$$
Since $q>1$ and $w\in A_q $, \eqref{e.sharpM2} implies the desired estimate:
\begin{align*}
\lVert S _{\D} f\rVert_{L^{2q}(w)}^{2q} &\le C _{q,w}  
\|(S_{\D}f) ^{\sharp}\|_{L^{2q}(w)}^{2q}  
\\
&\le C\|M_2f\|_{L^{2q}(w)}^{2q} \le C\|f\|_{L^{2q}(w)}^{2q} \le C w(F) \,.
\end{align*}
Note that we are appealing to $ \lVert g\rVert_{ L ^{2q} (w)} \le C _{q,w} \lVert g ^{\sharp}\rVert_{ L ^{2q} (w)} $, and  
 in the second inequality we used boundedness of the maximal function on $L^{q}(w)$. It remains to show \eqref{e.sharpM2}. 
 
 Take a dyadic interval $J$ and  $x \in J$. In the definition of the sharp maximal function, we are permitted to subtract off a constant, and we will take that constant to be 
$$c_J = \Big(\sum_{p\in \D: J\subset I_p} \frac{|\langle f,\phi_p\rangle |^2}{|I_p|}\Big)^{1/2} \,.$$
Then via H\"older's inequality, we have
\begin{align*}
\Biggl[\frac 1{|J|}\int_{J} |S_{\D}(f) - c_J| dy \Biggr] ^2  &\le  \frac 1{|J|}\int_{J} |S_{\D}(f)^2 - c_J^2| dy \\
&\le \frac 1{|J|}\sum_{p\in \D} |\langle f1_J, \phi_p\rangle |^2 
\\
&\le \frac 1{|J|}\int_J |f(y)|^2 dy  \le  M \lvert f\rvert ^2 (x) \,.
\end{align*}
This proves \eqref{e.sharpM2}.
\end{proof} 

We shall also need the following result (cf. \cite[Proposition 4.3]{oberlin-et-al}).
\begin{lemma} \label{l.tree-redecomp} The collection of trees selected in Lemma~\ref{l.treeselect-bysize} also satisfies for any $p\in (1,\infty)$:
\begin{equation}\label{e.BMO}
	\Bigl\|\sum_{T\in \T} 1_{I_T}\Bigr\|_{L^p(w)} \le C\sigma^{-2q} w(F)^{\frac 1p} \  ,
\end{equation}
and if $\P = \bigcup_{S\in \S} S$ is another tree decomposition of $\P$ then
\begin{equation}\label{e.efficient}
\sum_{T\in \T} w(I_T) \le C \sum_{S\in \S} w(I_S) \ \ .
\end{equation}
\end{lemma}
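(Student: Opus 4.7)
The plan is to derive both parts from the Carleson packing structure of the trees in $\T$. The key preparatory step is a localized strengthening of Lemma~\ref{l.treeselect-bysize}: for every dyadic interval $J$,
\begin{equation*}
\sum_{T\in\T,\,I_T\subset J} w(I_T)\ \le\ C\sigma^{-2q}\,w(J\cap F)\ \le\ C\sigma^{-2q}\,w(J).
\end{equation*}
I would prove this by rerunning the chain of estimates in the proof of Lemma~\ref{l.treeselect-bysize} with the input $f$ replaced by $f\,1_J$: the pairwise disjointness of the lower tiles in $\D=\{P_1:P\in\bigcup_T T_2\}$ is preserved under restricting to $\{p\in\D:I_p\subset J\}$, and the pointwise sharp-function estimate $(S_\D f)^\sharp\le C M_2 f$ localizes.

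For \eqref{e.BMO}, set $\Phi:=\sum_{T\in\T} 1_{I_T}$ and $c_T(g):=w(I_T)^{-1}\int_{I_T} g\,w$ and use duality. For any $g\ge 0$ with $\|g\|_{L^{p'}(w)}\le 1$, H\"older's inequality gives
\begin{equation*}
\int \Phi\,g\,w\ =\ \sum_T w(I_T)\,c_T(g)\ \le\ \Bigl(\sum_T w(I_T)\Bigr)^{1/p}\Bigl(\sum_T w(I_T)\,c_T(g)^{p'}\Bigr)^{1/p'}.
\end{equation*}
The first factor is $\le (C\sigma^{-2q} w(F))^{1/p}$ by Lemma~\ref{l.treeselect-bysize}, and the second is $\le (C\sigma^{-2q})^{1/p'}\|g\|_{L^{p'}(w)}$ by the weighted dyadic Carleson embedding theorem, whose hypothesis is precisely the local packing condition above with Carleson constant $C\sigma^{-2q}$. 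Taking the supremum over $g$ yields \eqref{e.BMO}.

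For \eqref{e.efficient}, I would begin from the energy-extraction bound $\|S_{T_2}f\|_{L^2(w)}^2 \ge (\sigma^2/4) w(I_T)$ together with the disjointness of the trees in $\T$ as collections of bitiles:
\begin{equation*}
\sum_T w(I_T)\ \le\ \tfrac{4}{\sigma^2}\sum_{P\in\bigcup_T T_2} |\langle f,\phi_{P_1}\rangle|^2\tfrac{w(I_P)}{|I_P|}\ =\ \tfrac{4}{\sigma^2}\sum_{S\in\S}\;\sum_{P\in (\bigcup_T T_2)\cap S} |\langle f,\phi_{P_1}\rangle|^2\tfrac{w(I_P)}{|I_P|}.
\end{equation*}
It then suffices to show that the inner sum is at most $C\sigma^2 w(I_S)$ for each $S\in\S$. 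Setting $C_S:=(\bigcup_T T_2)\cap S$, which is a subcollection of $S$ with pairwise disjoint lower tiles, split $C_S = C_S^{(2)} \cup C_S^{(1)}$ according to whether $\omega_{P_S}\subset\omega_{P_2}$ ($2$-overlapping with top $P_S$) or $\omega_{P_S}\subset\omega_{P_1}$ ($1$-overlapping); the $2$-overlapping piece is controlled by $\sigma^2 w(I_S)$ directly via the definition of $\size(\P)$.

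The main obstacle is the $1$-overlapping remainder $C_S^{(1)}$, where $\size(\P)$ does not apply. The useful structural observation is that inside $C_S^{(1)}$ the lower-tile frequency intervals all contain $\omega_{(P_S)_1}$ and therefore nest, so the phase-plane disjointness of the lower tiles forces the time intervals $\{I_P\}_{P\in C_S^{(1)}}$ to be pairwise disjoint in $I_S$. Bounding the tail is the delicate step: I would combine the Walsh-to-Haar representation $\phi_{P_1}=c_P\,1_{I_P}\phi_{(P_S)_1}$ from Lemma~\ref{l.walshtohaar} (which rewrites each $\langle f,\phi_{P_1}\rangle$ as an average of $f\phi_{(P_S)_1}$ over the disjoint $I_P$) with Cauchy--Schwarz, the $A_q$ compatibility $|I_P\cap F|/|I_P|\le [w]_{A_q}^{1/q}(w(I_P\cap F)/w(I_P))^{1/q}$, and the size characterization in Lemma~\ref{l.size-bound} to produce the bound $C\sigma^2 w(I_S)$, following the strategy of \cite{oberlin-et-al}*{Proposition~4.3} in the unweighted case. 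Summing the two pieces completes \eqref{e.efficient}.
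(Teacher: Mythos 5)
For \eqref{e.BMO} your route is correct but genuinely different from the paper's. You verify the localized packing estimate $\sum_{T\in\T,\ I_T\subset J}w(I_T)\le C\sigma^{-2q}w(J\cap F)$ by rerunning the proof of Lemma~\ref{l.treeselect-bysize} with $f1_J$, and you are right that this localization works: for $P$ with $I_P\subset J$ the pairings $\langle f,\phi_{P_1}\rangle$ and $\langle f1_J,\phi_{P_1}\rangle$ agree, the disjointness of the lower tiles in $\D$ survives the restriction, and $\|M_2(f1_J)\|_{L^{2q}(w)}^{2q}\le Cw(J\cap F)$. You then finish by duality plus the weighted dyadic Carleson embedding theorem. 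The paper instead deduces a good-$\lambda$ inequality $w(\{N>\lambda,\ M_w1_F\le c\sigma^{2q}\lambda\})\le\frac1{1000}w(\{N>\lambda/2\})$ from the same local packing estimate, then integrates in $\lambda$. Both approaches turn the localized packing estimate into an $L^p(w)$ bound on the counting function; yours is arguably cleaner, since the Carleson embedding theorem already packages the good-$\lambda$ machinery, while the paper's is more self-contained.

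For \eqref{e.efficient} your setup matches the paper (reduce to $\sum_{P\in D}|\langle f,\phi_{P_1}\rangle|^2 w(I_P)/|I_P|$ with $D=\bigcup_T T_2$ a disjoint collection of lower tiles, intersect with each $S\in\S$, split into the $2$-overlapping piece $S_2$ and the $1$-overlapping piece $S_1$), and your treatment of the $S_2$ piece via the size hypothesis is the same. However, your plan for the $1$-overlapping remainder $C_S^{(1)}$ has a gap. You propose to use the Walsh--Haar representation together with the $A_q$ comparison and ``the size characterization in Lemma~\ref{l.size-bound}''; but Lemma~\ref{l.size-bound} only gives $\size(\P)\le C\sup_P\bigl(w(I_P\cap F)/w(I_P)\bigr)^{1/q}$, i.e.\ a bound of size in terms of the density ratio, whereas your argument needs the \emph{reverse} inequality $\bigl(w(I_P\cap F)/w(I_P)\bigr)^{1/q}\le C\sigma$ for each $P\in C_S^{(1)}$, and that is false in general (a single tile with $I_P\subset F$ has density ratio $1$ while $\sigma$ may be small). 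Chaining the Walsh--Haar bound $|\langle f,\phi_{P_1}\rangle|\le |I_P\cap F|/|I_P|^{1/2}$ with the $A_q$ comparison therefore only yields $\sum_{P\in S_1}|\langle f,\phi_{P_1}\rangle|^2 w(I_P)/|I_P|\le C\sum_{P\in S_1} w(I_P)\le Cw(I_S)$ without the needed factor $\sigma^2$. The observation that closes the gap, and the one the paper uses, is that each singleton $\{P\}$ is itself a $2$-overlapping tree with top $P$, so that by definition $\size(\{P\})\le\size(\P)=\sigma$, which is exactly the pointwise bound $|\langle f,\phi_{P_1}\rangle|^2\,w(I_P)/|I_P|\le\sigma^2 w(I_P)$; then the spatial disjointness of the $I_P$, $P\in S_1$, inside $I_S$ finishes the estimate. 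With this replacement your argument for \eqref{e.efficient} is complete and coincides with the paper's.
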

The last condition quantifies an efficient aspect of the tree selection algorithm. 

\proof We first prove \eqref{e.BMO}. Let $M_w$ be the weighted maximal function. 
$$M_w f(x) = \sup_{I: x\in I} \frac{1}{w(I)}\int_I |f(y)|w(dy) \ \ .$$ 
Let $N = \sum_{T\in \T} 1_{I_T}$, then it suffices to show the good $\lambda$ inequality
\begin{equation}\label{e.goodlambda}
w(\{N > \lambda, M_w 1_F \le c\sigma^{2q}\lambda\}) \le \frac 1{1000} w(\{N>\lambda/2\}) 
\end{equation}
for some small absolute constant $c>0$. Indeed, it follows from \eqref{e.goodlambda} that
$$\|N\|_{L^p(w)}^p = \int_0^\infty p\lambda^{p-1} w(\{N > \lambda\}) d\lambda $$
$$\le C \int_0^\infty p\lambda^{p-1} w(\{M_w 1_F > c\sigma^{2q}\lambda\}) d\lambda$$
$$= C\sigma^{-2pq}\|M_w 1_F\|_{L^p(w)}^p \le C \sigma^{-2pq}\|1_F\|_{L^p(w)}^p = C \sigma^{-2pq}w(F)\ \ .$$

To prove \eqref{e.goodlambda}, decompose $\{N>\lambda/2\}$ into maximal dyadic intervals, and it suffices to show that for any such maximal $I$ with nontrivial intersection with $\{M_w 1_F \le c\sigma^{2q}\lambda\}$ we have
$$w(\{x \in I: N(x) > \lambda \}) \le \frac 1{1000} w(I) \ \ .$$
Let $\T_I = \{T\in \T: I_T\subset I\}$. Then the argument in Lemma~\ref{l.treeselect-bysize} applied to $f1_I$ gives
\begin{align*}
\sum_{T \in \T_I} w(I_T) \le C \sigma^{-2q} w(F\cap I)
& \le C w(I) \sigma^{-2q}\inf_{x\in I} (M_w 1_F)(x)
\\&\le Cw(I)\sigma^{-2q} c^{2q}\sigma^{2q} \lambda
 \le \lambda w(I)/4000 
\end{align*} if $c$ is chosen sufficiently small.

Consequently, for $N_I:= \sum_{T\in \T_I} 1_{I_T}$ we have
$$w(\{N_I > \lambda/4\}) \le 4\lambda^{-1}\|N_I\|_{L^1(w)} \le |I|/1000 \ \ .$$
Now, $N -N_I$ is constant on the parent $\pi(I)$ of $I$, is dominated by $\inf_{x\in\pi(I)} N(x)$, which in turn is less than $\lambda/2$ by maximality of $I$. Thus
$$\{x\in I: N(x) > \lambda\} \subset \{x\in I: N_I(x) > \lambda/4\}$$
and \eqref{e.goodlambda} follows.
 
\smallskip 

Now we'll show \eqref{e.efficient}. By the selection algorithm, we have 
\begin{align*}
\sum_{T\in \T} w(I_T) &\le C \sigma^{-2} \sum_{T\in \T} \int \sum_{P\in T_2}|\<f, \phi_{P_1}\>|^2 \frac{1_{I_P}}{|I_P|} w(x)dx
\\&= C \sigma^{-2} \sum_{P\in D} |\<f, \phi_{P_1}\>|^2 \frac{w(I_P)}{|I_P|} \ \ ,
\\
\textup{where} \quad D & :=\bigcup_{T\in \T} T_2 \ \ .
\end{align*}
We'll show that
$$\sum_{P\in D} |\<f, \phi_{P_1}\>|^2 \frac{w(I_P)}{|I_P|} \le C\sigma^2 \sum_{S \in \S} w(I_S)$$
and that will complete the proof of \eqref{e.efficient}.

Now, for any tree $S\in \S$ we can decompose $S\cap D$ into two trees $S_1$ and $S_2$ with the same top interval, where $S_1$ is $1$-overlapping and $S_2$ is $2$-overlapping. Clearly $\bigcup_{S\in \S} (S_1\cup S_2) = D$. By given assumption, we have
$$\sigma \ge \size(S_2) \sim \frac{1}{w(I_S)^{1/2}} (\int \sum_{P\in S_2} |\<f, \phi_{P_1}\>|^2 \frac{1_{I_P}}{|I_P|} w(x)dx)^{1/2}$$
therefore
\begin{equation}\label{e.S2}
\sum_{P\in S_2} |\<f, \phi_{P_1}\>|^2 \frac{w(I_P)}{|I_P|} \le C\sigma^2 w(I_S) \ \ .
\end{equation}
On the other hand, the selection algorithm ensures that the $1$-tile of any two elements of $D$ are disjoint. Therefore each $S_1$ contains only spatially disjoint elements. If $P\in S_1$ then
$$\frac{1}{w(I_P)} |\<f,\phi_{P_1}\>|^2 \frac{w(I_P)}{|I_P|} \le [\size(\{P\})]^2 \le \sigma^2$$
so we obtain
\begin{equation}\label{e.S1}
\sum_{P\in S_1} |\<f, \phi_{P_1}\>|^2 \frac{w(I_P)}{|I_P|} \le C\sigma^2 \sum_{P\in S_1} w(I_P) \le C \sigma^2 w(I_S) \ \ .
\end{equation}
Summing over $S\in \S$ of \eqref{e.S1} and \eqref{e.S2} we obtain the desired estimate.\endproof

\subsection{Tree selection by density}
The proof of the next Lemma follows from standard arguments, we omit details (cf. \cite{MR1783613}). 
\begin{lemma}\label{l.treeselect-bydensity}
	Let $\lambda >0$ and let $\P$ be a collection of bitiles. Then there is an $ \P' \subset \P$ with $\size(\P')<\lambda/2$ 
such that $\P \setminus \P'$ can be written as a union of trees $\P \setminus \P= \bigcup _{T\in \T} T$ with
$$\sum_{T\in \T} w(I_T) \le C \lambda^{-r'} w(G)\,.$$
\end{lemma}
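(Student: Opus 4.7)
The plan is to run a greedy top-down stopping-time algorithm that isolates maximal phase-plane regions witnessing density at least $\lambda/2$, and then convert the witness bound into an area estimate using only the geometric disjointness of incomparable bitiles together with the normalization $\sum_j|a_j(x)|^{r'}=1$. Initialize $\P':=\P$ and $\T:=\emptyset$. While there exist $P\in\P'$ and a bitile $Q\ge P$ with
$$
\frac{1}{w(I_Q)}\int_{I_Q}|g(x)|^{r'}\sum_{k:\,N_k(x)\in\omega_Q}|a_k(x)|^{r'}w(x)\,dx\ge(\lambda/2)^{r'},
$$
pick such a $Q$ that is $<$-maximal among all witnesses, form the tree $T:=\{P'\in\P':P'<Q\}$ with designated top $Q$, add $T$ to $\T$, and replace $\P'$ by $\P'\setminus T$. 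When the process terminates, $\density(\P')<\lambda/2$ and $\P\setminus\P'=\bigcup_{T\in\T}T$ by construction.

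Next, I would argue the selected tops $\{Q_T\}_{T\in\T}$ are pairwise $<$-incomparable. Indeed, if $Q_{T_1}<Q_{T_2}$ with $T_1$ chosen at a later step than $T_2$, then any $P\in\P'$ that triggered the selection of $T_1$ satisfies $P<Q_{T_1}<Q_{T_2}$; but then $Q_{T_2}$ would have been a strictly larger valid witness for that same $P$ at that step, contradicting $<$-maximality of $Q_{T_1}$. By the identification in Section~\ref{s.walsh}, two bitiles are $<$-incomparable precisely when their rectangles $I\times\omega$ are disjoint in the phase plane; consequently, for each fixed $x$ the family $\{\omega_{Q_T}:x\in I_{Q_T}\}$ is pairwise disjoint, so each value $N_k(x)$ lies in at most one such $\omega_{Q_T}$.

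Summing the density witness inequality over $T\in\T$ and interchanging the sum with the integral yields
$$
(\lambda/2)^{r'}\sum_{T\in\T}w(I_{Q_T})\le\int|g(x)|^{r'}\sum_{k}|a_k(x)|^{r'}\Bigl(\sum_{T\in\T}1_{I_{Q_T}}(x)\,1_{\omega_{Q_T}}(N_k(x))\Bigr)w(x)\,dx.
$$
By the disjointness observation the parenthesized sum is at most $1$; since $\sum_k|a_k(x)|^{r'}=1$ pointwise and $|g|\le 1_G$, the right-hand side is bounded by $w(G)$, which gives $\sum_{T\in\T}w(I_{Q_T})\le C\lambda^{-r'}w(G)$ as required.

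The only real subtlety I expect is organizational: one must allow auxiliary tops $Q$ that need not themselves belong to $\P$, and one must guarantee that a $<$-maximal witness exists at every step, which can be arranged by first reducing to a finite sub-collection with a prescribed upper bound on spatial scale and then passing to the limit. Once the selection is set up, the incomparability of the tops and the rectangle disjointness it forces in the phase plane carry essentially the entire estimate.
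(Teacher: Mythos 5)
Your proof is correct and follows the standard density-selection argument that the paper cites (Lacey--Thiele, Muscalu--Tao--Thiele), correctly transported to the weighted density: the greedy choice of $<$-maximal witness bitiles, the consequent pairwise incomparability and hence phase-plane disjointness of the selected tops, and the combination of $\sum_k|a_k(x)|^{r'}=1$ with $|g|\le 1_G$ to majorize the summed witness integrals by $w(G)$ are exactly the right ingredients, and your treatment of the finiteness/limiting issue is the usual one. You also correctly read the statement's $\size(\P')<\lambda/2$ as the typo it is for $\density(\P')<\lambda/2$.
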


\section{The tree estimate}
The estimates of the bilinear sums $B _{\P}$  are based on the following estimate:

\begin{lemma}\label{l.tree-est}
Let $T$ be a tree, then for any $s\in [1,r']$ we have
\begin{equation}\label{e.tree-est}
\|gC_{\T} f \|_{L^{s}(w)}  \le C w(T)^{1/s} \size(T) \density(T) \,.
\end{equation}
\end{lemma}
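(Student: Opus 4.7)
The plan is to decompose $T = T^{(1)} \cup T^{(2)}$ into its $1$-overlapping and $2$-overlapping subtrees and estimate each contribution separately, since $C_T f = C_{T^{(1)}} f + C_{T^{(2)}} f$.

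For the $2$-overlapping piece, the intervals $\omega_{P_2}$ for $P\in T^{(2)}$ are nested dyadic intervals containing $\omega_{(P_T)_2}$. A pointwise Cauchy--Schwarz in $P$, using $|\phi_{P_1}(x)|^2 = 1_{I_P}(x)/|I_P|$, yields
\begin{equation*}
|C_{T^{(2)}} f(x)| \le S_{T^{(2)}} f(x) \cdot \Bigl(\sum_{P \in T^{(2)}} 1_{I_P}(x) |a_P(x)|^2\Bigr)^{1/2}.
\end{equation*}
The first factor has the $L^2(w)$-bound $\|S_{T^{(2)}} f\|_{L^2(w)} \le \size(T)\, w(I_T)^{1/2}$ from the definition of size. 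The second factor, paired against $|g|$ via H\"older in $L^{r'}$ and $L^r$ together with the constraint $\sum_j |a_j|^{r'}\le 1$ and the nested structure, produces a factor of $\density(T)$; combining and localizing to $I_T$ gives the desired bound for $T^{(2)}$.

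For the $1$-overlapping piece, I first observe that for each fixed $x$ the intervals $\omega_{P_1}$ over $P \in T^{(1)}$ with $x \in I_P$ form a nested chain, so their siblings $\omega_{P_2}$ are pairwise disjoint dyadic intervals at distinct scales. Hence for each $j$ at most one $P\in T^{(1)}$, call it $P(x,j)$, contributes at $(x,j)$. Applying Lemma~\ref{l.walshtohaar} with $q = (P_T)_1$ gives $\phi_{P_1}(x) = c_P\, 1_{I_P}(x)\, \phi_{(P_T)_1}(x)$ with $|c_P|^2 = |I_T|/|I_P|$, so that
\begin{equation*}
C_{T^{(1)}} f(x) = |I_T|\, \phi_{(P_T)_1}(x) \sum_{j} \bigl(E_{I_{P(x,j)}} h\bigr)(x)\, \widetilde{a}_j(x),
\end{equation*}
where $h := f \phi_{(P_T)_1}$, $E_I$ is the dyadic conditional expectation on $I$, and $\widetilde{a}_j(x) := a_j(x)\, 1_{\{N_{j-1}(x)\notin \omega_{P(x,j)}\}}$. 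Since the $I_{P(x,j)}$ form a nested chain of dyadic subintervals of $I_T$ containing $x$, the sum over $j$, after H\"older against $\widetilde{a}_j$, is dominated by the dyadic martingale $r$-variation of $(E_I h)_I$. The weighted L\'epingle inequality of Lemma~\ref{l.weightLepingle} then bounds this in $L^2(w)$ by $\|h\|_{L^2(w)}$; this latter quantity is controlled by $\size(T)$ via application to the singleton $2$-overlapping tree $\{P_T\}$, and pairing against $g$ extracts the $\density(T)$ factor.

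The main obstacle is the $1$-overlapping case: one must identify the $j$-indexed expression with a genuine dyadic martingale $r$-variation so that Lemma~\ref{l.weightLepingle} applies, and simultaneously extract the density factor from the pairing with $g$. The $2$-overlapping case is by contrast a direct consequence of Cauchy--Schwarz and the defining $L^2(w)$ bound for $\size$.
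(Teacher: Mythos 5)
Your decomposition into $1$- and $2$-overlapping trees and your identification of the two key tools (Lemma~\ref{l.walshtohaar} and the weighted L\'epingle inequality) are both correct, but you have matched the tools to the wrong cases, and both of your cases have genuine gaps.

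In the $2$-overlapping case, the early Cauchy--Schwarz in $P$ destroys the variational structure that L\'epingle would need, and the resulting second factor $\bigl(\sum_{P\in T^{(2)}} 1_{I_P}(x)\,|a_P(x)|^2\bigr)^{1/2}$ cannot be controlled by the density. For a $2$-overlapping tree the intervals $\omega_{P_2}$ are nested, so for a fixed $x$ and a fixed $j$ the conditions $N_{j-1}(x)\notin\omega_P$, $N_j(x)\in\omega_{P_2}$ select an entire \emph{range} of scales $|I_P|$; a single $a_j(x)$ is therefore duplicated across arbitrarily many $P$'s, and $\sum_P 1_{I_P}|a_P|^2$ can be much larger than anything bounded by $\sum_j|a_j|^{r'}\le 1$. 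This is precisely the situation in which one must \emph{not} Cauchy--Schwarz. The paper instead applies Lemma~\ref{l.walshtohaar} with the \emph{sibling} relation, so that $\phi_{P_1}$ turns into the Haar function $h_{I_P}$ times $\phi_{P_T}$ (with $P_T$ the upper tile of the tree top); this makes the inner $P$-sum for each $k$ a consecutive block of martingale differences $\Delta_j\varphi_T$, and the H\"older step against $(a_k)_k$ then produces exactly the $r$-variation norm $\|\varphi_T\|_{V^r}$ to which L\'epingle applies.

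In the $1$-overlapping case you have overlooked a stronger structural fact and, as a result, set up an incorrect application of L\'epingle. Because the tiles $\{P_2:P\in T^{(1)}\}$ are pairwise disjoint \emph{and} the sequence $N_k(x)$ is monotone, for each fixed $x$ there is at most one \emph{pair} $(P,k)$ with $(x,N_k(x))\in P_2$ and $(x,N_{k-1}(x))\notin P$ — not merely one $P$ per $k$. The whole $(P,k)$ double sum collapses to a single term, and the estimate reduces to $\sup_P |\langle f,\phi_{P_1}\rangle|/|I_P|^{1/2} \cdot \sup_k |a_k 1_{\{N_k\in\omega_{Q_J}\}}|$ integrated against $|g|w$ over the partition $\J$, which gives $w(I_T)^{1/r'}\size(T)\density(T)$ directly. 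No L\'epingle is needed. Your attempt to force a martingale $r$-variation here does not work: after H\"older you obtain $\bigl(\sum_j |E_{I_{P(x,j)}}h(x)|^r\bigr)^{1/r}$, which is an $\ell^r$ norm of \emph{values} of the martingale $(E_I h)_I$ at selected stopping scales, not of consecutive martingale \emph{differences}, and the former can be as large as $M(x)^{1/r}|E_{I_T}h|$, which is unbounded. The telescoping needed to turn this into a variation has no source in the sum $\sum_j (E_{I_{P(x,j)}}h)\,\widetilde a_j$, since the $\widetilde a_j$ are arbitrary unit-$\ell^{r'}$ coefficients. In short: the $1$-overlapping tree is the easy, non-variational case, and the $2$-overlapping tree — where Lemma~\ref{l.walshtohaar} produces Haar differences rather than averages — is where the variational machinery lives.
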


\begin{proof} By H\"older's inequality it suffices to show \eqref{e.tree-est} for $s=r'$. By dividing $ T$ into two subtrees,  if necessary,   we can assume that the tree is either $1$-overlapping or $2$-overlapping. We will return to this dichotomy below.  

Let $\J$ be the set of maximal dyadic intervals inside $I_T$ that does not contain any $I_P$ for $P\in T$. This collection partitions $I_T$, and we rewrite the left hand side of \eqref{e.tree-est} as
$$\Bigl(\sum_{J \in \J} \int_J |(C_{T}f)(x)  g(x)|^{r'} w(x) dx\Bigr)^{1/r'} \ \ .$$ 
Fix $J\in \J$. By maximality of $J$,  there is some $P_J \in T$ such that $I_{P_J} \subset \pi(I)$, where $\pi(J)$ is the dyadic parent of $J$.  It is clear that there is  a bitile $Q_J$ such that
$$P_J < Q_J  \ \ I_{Q_J} = \pi(J)  \ \ .$$
In particular, $w_{Q_J} \cap w_{P_T} \ne\emptyset$. On the other hand, again by maximality of $J$, for any $P\in T$ such that $I_P\cap J\ne \emptyset$ we have $J \subsetneqq  I_P$. 
Consequently, $w_{P} \subset w_{Q_J}$ for those $P$'s, thus
\begin{align}
\label{e.localdensitybound1}
\bigcup_{P\in T: \, I_P \cap J \ne \emptyset} \omega_{P_2} \subset \omega_{Q_J} \ \ .
\end{align}
Furthermore, it is clear that
\begin{align}
\label{e.localdensitybound2} \int_J \sum_{k: N_k(x) \in \omega_{Q_J}} |a_k(x)g(x)|^{r'} w(x) dx  \le C w(J) \density(T)^{r'} \,. 
\end{align}
Here, the constant $ C$ depends upon the  doubling property of $ w$, which is controlled by $ [w ] _{A_q}$. 

\noindent \underline{Case 1: $T$ is $1$-overlapping}. Then the tiles  $\{P_2: P\in T\}$ are disjoint. Then by monotonicity of $N_k$'s, 
for any $ x$ there is at most one $P\in T$  such that  there is a $ k\in [1, M(x)]$ satisfying both $(x,N_k(x)) \in P_2$ and $(x,N_{k-1}(x)) \not\in P$. Clearly, such $k$ if exists is unique. Consequently, using \eqref{e.localdensitybound1} and \eqref{e.localdensitybound2} we have
\begin{align*}
	\Big(\sum_{J \in \J} \int_J |(C_{T}f)(x) &	 g(x)|^{r'} w(x) dx\Big)^{1/r'}
\\
&\le C \sup_{P \in T}  \frac{|\langle f,\phi_{P_1}\rangle |}{|I_P|^{1/2}}  \Big(\sum_{J \in \J} \int_J  \sup_{k} |a_k(x) 1_{\{N_k(x) \in \omega_{Q_J} \}}  g(x)|^{r} w(x)dx \Big)^{1/r'}
\\
&\le C \sup_{P\in T} \frac{|\langle f,\phi_{P_1}\rangle |}{|I_P|^{1/2}} \density(T) \Big(\sum_{J \in \J}w(J)\Big)^{1/r'}
\\
&= C \sup_{P\in T} \Big(\frac{1}{w(I_P)} \int \Big[|\langle f,\phi_{P_1}\rangle |^2 \frac{1_{I_P}}{|I_P|}\Big] w(x) dx\Big)^{\frac 1{2}} \density(T) w(I_T)^{1/r'}
\\
&\le C w(I_T)^{1/r'}\size(T) \density(T) \, \,.
\end{align*}

 \noindent \underline{Case 2: $T$ is $2$-overlapping}. From Lemma~\ref{l.walshtohaar}, it follows that we can write 
\begin{equation*}
\sum_{P\in T} \langle f, \phi _{P_1} \rangle  \phi _{P_1} 
= \phi _{P_T} \lvert  I _{T}\rvert ^{1/2}   \sum_{P\in T}   {\widetilde\epsilon_P} \langle f,  \phi _{P_1} \rangle    h _{I_P}  
\end{equation*}
here $\widetilde \epsilon_P = \pm \epsilon_P$ and the sign depends on the sign of the implicit constant in the application of Lemma~\ref{l.walshtohaar}.
Also, $ \phi _{P_T} $ is the Walsh packet associated to the  \emph{upper} tile of the top of the tree, 
so that $ \lVert  \phi _{P_T} \lvert  I _{T}\rvert ^{1/2} \rVert_{ \infty } =1$. In particular, we can ignore this term in the 
considerations below.  For convenience, below we denote $\varphi_T = \sum_{P\in T}   {\widetilde\epsilon_P} \langle f,  \phi _{P_1} \rangle    h _{I_P}$.

Now, for convenience denote by $\Delta_j$ the projection of a function onto the space generated by Haar functions adapted to dyadic intervals of length $2^{1-j}$. The function $ \varphi_T $, being a linear combination of Haar functions, satisfies 
the familiar identity below, for any dyadic interval $ K$:  
\begin{equation*}
\sum_{\substack{P\in T\\ K\subsetneq I_P}}    { \widetilde \epsilon _{P}} \langle f , \phi _{P_1}  \rangle    h _{I_P} 
= \Delta_{-\log_2|K|} (\varphi_T) \ \ \text{, and is locally constant on $K$}.
\end{equation*}
Now, since $T$ is a tree, the intervals $\omega_P$ for $P\in T$ are clearly nested. Furthermore, the $2$-overlapping property of $T$ means that the intervals $ \omega _{P_2}$ 
for $ P\in T$ are also nested.  Hence, if $ N \in \omega _{P_2}$, then $ N \in \omega _{P'_2}  $ for all tiles $ P' \in T$ with $ \lvert  I_{P'}\rvert \le \lvert  I _{P}\rvert  $, and if $N \not\in \omega_P$ then $N\not\in\omega_{P'}$ for all $P'\in T$ with $|I_{P'}| \ge |I_P|$.  Combining these observations, for any $J\in \mathbf J$ we can find measurable functions defined on $J$
$$\tau_M(x) < \xi_M(x) \le \dots \le \tau _{1} (x) < \xi_1(x) \le \tau_0(x)< \xi_0(x) \le -\log_2|J|$$
such that for any $x\in J$:
\begin{align*}
\sum_{k=1}^{M(x)}& \Biggl\lvert \sum_{P\in T: J\subsetneq I_P} { \widetilde \epsilon _{P}} \langle f, \phi _{P_1} \rangle    h _{I_P} (x)a_k(x)1_{\{N_{k-1}(x) \not\in \omega_P, \, N_k(x) \in \omega_{P_2}\}}  \Biggr\rvert
\\	&= 
\sum_{k}  \Bigl\lvert \sum_{\tau _k (x) < j \le \xi_{k} (x)} (\Delta_j \varphi_T )(x) \Bigr\rvert |a_k(x)|
\\
&\le \Big(\sum_{k}  \lvert \sum_{\tau _k (x) < j \le \xi_{k} (x)} (\Delta_j \varphi_T )(x) \rvert^{r}\Big)^{1/r} \Big(\sum_{k}|a_k(x)|^{r'}\Big)^{1/r'} 
\\
&= \Big [\frac 1{|J|} \int_{J} \big(\sum_{k}  \lvert \sum_{\tau _k (x) < j \le \xi_{k} (x)} (\Delta_j \varphi_T )(y) \rvert^{r}\big)^{1/r} dy\Big] \Big [\sum_{k}|a_k(x)|^{r'}\Big]^{1/r'} 
\\
&\le M_J (\|\varphi_T\|_{V^r}) \big(\sum_{k}|a_k(x)|^{r'}\big)^{1/r'} \ \ ;
\\ \textup{where} \quad 
\|\varphi_T\|_{V^r} &:= \sup_{m, n_0<\dots <n_m} \Big(\sum_{1\le k \le  m} |\sum_{n_{k-1}< \le n_k} (\Delta_j \varphi_T)|^r\Big)^{1/r} \ \ .
\end{align*}

Thus, using \eqref{e.localdensitybound1} and \eqref{e.localdensitybound2}, we obtain
\begin{align*}
\|C_T(f)g\|_{L^{r'}(w)} &\le C\Big(\sum_{J\in \J} M_J(\varphi_T)^{r'} w(J) \density(T)^{r'}\Big)^{1/r'}
\\
&\le C \|1_{I_T} M(\varphi_T)\|_{L^{r'}(w)} \density(T)
\\
&\le C w(I_T)^{\frac1{r'}-\frac 1{2q}}\|M(\|\varphi_T\|_{V^r})\|_{L^{2q}(w)} \density(T) \ \ \text{(using $2q > 2 > r'$)}
\\
&\le C w(I_T)^{\frac1{r'}-\frac 1{2q}} \|\|\varphi_T\|_{V^r}\|_{L^{2q}(w)} \density(T) \ \ \text{(using $w\in A_q \subset A_{2q}$)}
\\
&\le C w(I_T)^{\frac1{r'}-\frac 1{2q}}  \|\varphi_T\|_{L^{2q}(w)} \density(T) 
 \ \ .
\end{align*}
The last inequality depends upon the weighted variant of an inequality of L\'epingle taken up in the next section. 

Now, by a standard duality argument and boundedness of the dyadic square function on $L^{2q}(w)$ (cf. \cite{MR1748283}) one has
$$\|\varphi_T\|_{L^{2q}(w)}  \le C  \|S(\varphi_T )\|_{L^{2q}(w)}  $$
where $S(g) := (\sum_I |\langle g,h_I\rangle |^2 \frac 1{|I|})^{1/2}$, and the constant above depends upon $ q$ and $ w$.  
Using Lemma~\ref{l.walshtohaar} and the fact that $T$ is a tree, we obtain $ S (\varphi_T ) = S_Tf $. Lemma~\ref{l.tree-est} now follows, using the BMO characterization of size proved in Lemma~\ref{l.BMOcharacterize}. 
\end{proof}

\section{A weighted L\'epingle inequality}
For each $i$ let $\Delta_i$ be the projection onto the space of Haar functions adapted to dyadic intervals of length $2^{1-i}$:
$$\Delta_i f = \sum_{I: |I|=2^{1-i}} \<f, h_I\> h_I \ \ .$$
In this section we prove the following extension of an inequality of L\'epingle \cite{MR0420837} (cf. \cites{MR1019960, MR2434308, MR933985}).
\begin{lemma}\label{l.weightLepingle} Let $1<p<\infty$, $w\in A_p$ and $r>2$. Then for any function $f$ we have
\begin{equation}\label{e.rnot2}
\|\sup_{M, N_0<N_1<\dots<N_M} \Big(\sum_{k=1}^M |\sum_{N_{k-1}< j \le N_k} \Delta_j f|^r\Big)^{1/r}\|_{L^p(w)} \le C \|f\|_{L^p(w)} \ \ .
\end{equation}
Furthermore, the following endpoint estimate holds uniformly over $\lambda >0$:
\begin{equation}\label{e.r2}
\|\lambda M_\lambda^{1/2} \|_{L^p(w)} \le C\|f\|_{L^p(w)} \ \ ,
\end{equation}
$$M_\lambda(x) := \sup_{M, N_0<N_1<\dots<N_M} \sharp \{k: |\sum_{N_{k-1}< j \le N_k} \Delta_j f| > \lambda\}  \ \ . $$
\end{lemma}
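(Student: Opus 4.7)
The plan is to prove the jump inequality \eqref{e.r2} first and then deduce the $r>2$ variational estimate \eqref{e.rnot2} by real interpolation with the trivial $r=\infty$ endpoint. The $r=\infty$ bound is immediate: writing $V^\infty f(x) := \sup_{M,N_0<\dots<N_M}\max_k |S_{N_k}f(x)-S_{N_{k-1}}f(x)|$, one has $V^\infty f(x)\le 2\sup_N |S_N f(x)|$, and since $S_N f = \sum_{j\le N}\Delta_j f$ is a dyadic martingale, the weighted Doob maximal inequality (standard for $w\in A_p$) gives $\|V^\infty f\|_{L^p(w)}\le C\|f\|_{L^p(w)}$. The implication from \eqref{e.r2} plus this $r=\infty$ bound to the $r>2$ estimate \eqref{e.rnot2} then follows the standard real-interpolation principle (Bourgain, Jones--Seeger--Wright) that controls the $r$-variation by a geometric combination of the dyadic-jump quantity $\lambda\sqrt{M_\lambda}$ and the martingale maximal function.

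The main work is therefore the jump inequality \eqref{e.r2}. Fix $\lambda>0$ and linearize by measurably selecting sequences $N_0(x)<\dots<N_{M(x)}(x)$ that realize the $\lambda$-jumps of $\{S_N f(x)\}$. One has the pointwise estimate
\begin{equation*}
\lambda^2 M_\lambda(x) \le \sum_k \Big|\sum_{N_{k-1}(x) < j \le N_k(x)} \Delta_j f(x)\Big|^2,
\end{equation*}
which reduces \eqref{e.r2} to an $L^{p/2}(w)$-bound on the right hand side, uniform in the choice of the $N_k$. In the unweighted case this follows from Haar orthogonality across disjoint stopping intervals together with Doob's square function inequality. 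In the weighted setting the Haar system is not orthogonal in $L^2(w)$, and the substitute will be the weighted dyadic square function estimate $\|(\sum_j |\Delta_j f|^2)^{1/2}\|_{L^p(w)} \le C\|f\|_{L^p(w)}$ for $w\in A_p$ (see \cite{MR1748283}), combined with a stopping-time decomposition that dominates the jump sum by $S(f)$.

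The hardest step will be cleanly passing from the pointwise jump-sum bound above to the $L^p(w)$-level estimate in the absence of exact orthogonality. The most economical route appears to be to prove \eqref{e.r2} first for a single distinguished pair $(p_0,w_0)$, for instance $p_0=2$ and $w_0\in A_2$, by combining duality, the weighted square function bound, and a good-$\lambda$ argument that links the jump sum to $S(f)$, and then invoke Rubio de Francia's weighted extrapolation theorem to obtain the full $A_p$-range for $1<p<\infty$ at once. An alternative, more self-contained route is to recognize $f\mapsto \lambda\sqrt{M_\lambda}$ as a vector-valued dyadic Calder\'on--Zygmund operator on the natural filtration and verify the CZ hypotheses directly; this avoids extrapolation at the cost of additional bookkeeping on a Banach-space-valued kernel.
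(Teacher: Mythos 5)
Your high-level plan matches the paper's: prove the jump inequality \eqref{e.r2} first, and then run a Bourgain-type argument to get the $r$-variation bound \eqref{e.rnot2} for $r>2$. Your proposed deduction of \eqref{e.rnot2} via ``real interpolation with the $r=\infty$ maximal endpoint'' is a legitimate route, though the paper does something slightly different and more self-contained: it splits into the exceptional set $E=\{\sup_k|a_k|>\lambda\}$, where \eqref{e.r2} directly gives the measure bound, and $E^c$, where $V_r^{r/2}$ is dominated by $\sum_{n<0}2^{n(1-\epsilon)r/2}\lambda^{r/2}M_{2^n\lambda}^{1/2}$ and summed geometrically using $r>2$; this yields a weak-type bound which is then upgraded by interpolation using the openness/reverse H\"older property of $A_p$. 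Either deduction is fine.

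The genuine gap in your proposal is in the proof of \eqref{e.r2} itself. You correctly reduce to bounding $\bigl(\sum_k|\sum_{N_{k-1}<j\le N_k}\Delta_j f|^2\bigr)^{1/2}$ in $L^p(w)$ for stopping times $N_k$, but your proposed mechanism, a good-$\lambda$ argument ``linking the jump sum to $S(f)$,'' does not work as stated: the jump square sum is not pointwise dominated by the Haar square function $S(f)$, because each inner term $\sum_{N_{k-1}<j\le N_k}\Delta_j f$ is a partial sum rather than a single martingale difference, and Cauchy--Schwarz only gives $|\sum_{N_{k-1}<j\le N_k}\Delta_j f|^2\le(N_k-N_{k-1})\sum_{N_{k-1}<j\le N_k}|\Delta_j f|^2$, which is lossy. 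The paper's clean route, which you do not identify, is to observe that once the $N_k$ are stopping times, the collections $\T_k=\{I:\ I\ \text{enters the sum at stage }k\}$ are pairwise disjoint families of dyadic intervals, so that $\sum_{N_{k-1}(x)<j\le N_k(x)}\Delta_j f(x)=\sum_{I\in\T_k}\langle f,h_I\rangle h_I(x)$ exactly; after randomizing by $\epsilon_k=\pm1$, $\sum_k\epsilon_k\sum_{I\in\T_k}\langle f,h_I\rangle h_I$ is a genuine martingale transform (a Haar multiplier with $\pm1$ symbols), whose $L^p(w)$ boundedness for $w\in A_p$ is already available (Wittwer \cite{MR1748283}); Khintchine then recovers the square sum. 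This martingale-transform identification is the essential idea, and it also makes your extrapolation layer unnecessary, since the weighted martingale transform bound already holds for all $1<p<\infty$ and $w\in A_p$. Your alternative vector-valued CZ route might be made to work but would be considerably heavier than what is actually required.
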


The considerations in the proof are of a standard nature. 
\proof We first show that \eqref{e.r2} implies \eqref{e.rnot2} using an argument in \cite{do-muscalu-thiele} (cf. \cite{MR1019960}). By standard arguments, we can remove the supremum in the estimates and assume instead that $M, N_0<\dots < N_M$ are measurable functions of $x$. It suffices to show that if $w\in A_p$ then
$$w(\{V_r f > \lambda\}) \le C\lambda^{-p}\|f\|_{L^p(w)}^p  \ \ ,$$
from this the desired strong bound follows from interpolation (exploiting the reverse H\"older property and the nesting property of $A_p$ classes). Via scaling invariant, one can assume $\|f\|_{L^p(w)}=1$, and let $a_k$ denote $\sum_{N_{k-1}< j \le N_k} \Delta_j f$. Then on the set
$$E = \{x: \sup_{k} |a_k(x)| > \lambda\}$$
one has $M_\lambda(x) = \sharp\{k: |a_k|>\lambda\} \ge 1$, thus using \eqref{e.r2} one has
\begin{equation}\label{e.exception}
w(E) \le \int  M_\lambda(x)^{p/2} w(x) \le C\lambda^{-p}\|f\|^p_{L^p(w)} = C\lambda^{-p}\ \ .
\end{equation}
On $E^c$, for any $\epsilon >0$ one has
$$[V_r f(x)]^{r/2} \le C\Big(\sum_{n < 0} (2^n \lambda)^r M_{2^n\lambda}(x)\Big)^{1/2}$$
$$\le C \sum_{n<0} 2^{n(1-\epsilon)r/2} \lambda^{r/2} M_{2^n\lambda}(x)^{1/2} \ \ \text{any $\epsilon>0$} \ \ .$$
By triangle inequality, it follows that
$$w(\{V_r f> \lambda\} \cap E^c) \le \lambda^{-pr/2}\|1_{E^c} (V_r f)^{r/2} \|_{L^p(w)}^p$$
$$ \le C \lambda^{-pr/2}\Big(\sum_{n<0} 2^{n(1-\epsilon)r/2} \lambda^{r/2} \|M_{2^n\lambda}^{1/2}\|_{L^p(w)}\Big)^p$$
$$ \le C \lambda^{-pr/2}\Big(\sum_{n<0} 2^{n(1-\epsilon)r/2} \lambda^{r/2} (2^n\lambda)^{-1}\|f\|_{L^p(w)}\Big)^p \ \ \text{(using \eqref{e.r2})}$$
Choosing $\epsilon>0$ small one can ensure that $(1-\epsilon)r/2 > 1$. It follows that
\begin{equation}\label{e.small}
w(\{V_r f> \lambda\} \cap E^c)  \le C \lambda^{-pr/2} \Big(\lambda^{r/2}\lambda^{-1}\|f\|_{L^p(w)}\Big)^p  =  C \lambda^{-p} \ \ .
\end{equation}
The desired estimate now follows from \eqref{e.exception} and \eqref{e.small}.

We now show \eqref{e.r2}. Fix $\lambda >0$. It suffices to show that for $N_0(x) < N_1(x) < \dots $ we have
$$\Big\|\big(\sharp\{k: |\sum_{N_{k-1}(x) < j \le N_k(x)} (\Delta_j f)(x)|>\lambda \}\big)^{1/2}\Big\|_{L^p(w)} \le C\lambda^{-1} \|f\|_{L^p(w)} \ \ ,$$
furthermore by a standard argument (see for instance \cite{MR1019960} or \cite{MR2434308}) one can assume that $N_k(x)$ are stopping times with respect to the dyadic martingale in $\mathbb R$. Here, a function $N(x)$ is a stopping time if the level set $\{x: N(x) = k\}$ is an union of standard dyadic intervals of length $2^{-k}$. With this assumption, we'll show the following stronger estimate
$$\Big\|\big(\sum_{k} |\sum_{N_{k-1}(x) < j \le N_k(x)} (\Delta_j f)(x)|^2\big)^{1/2}\Big\|_{L^p(w)} \le C\|f\|_{L^p(w)} \ \ ,$$
and by randomization it suffices to prove for any random sequence $\epsilon_k = \pm 1$:
\begin{equation}\label{e.haarmuliplierweight}
\Big\|\sum_{k\ge 1} \epsilon_k \sum_{N_{k-1}(x) < j \le N_k(x)} (\Delta_j f)(x)\Big\|_{L^p(w)} \le C\|f\|_{L^p(w)} \ \ .
\end{equation}
Take any $k\ge 1$. Let $\T_{\le k}$ be the set of dyadic intervals $I$ such that \\
(i) $N_k$ is constant on $I$, and\\
(ii) for any $x\in I$ the interval $I$ has length at most $2^{-N_k(x)}$.\\
 By the stopping time property of $N_k$ and by the increasing property of $N_k$'s, it is clear that $\T_{\le k}\subset \T_{\le k-1}$, and define
$$\T_k = \T_{\le k-1} \setminus \T_{\le k} \ \ .$$ 
One now writes
$$\sum_{N_{k-1}(x) < j \le N_k(x)} (\Delta_j f)(x) = \sum_{I \in \T_k} \<f, h_I\> h_I(x)$$
and \eqref{e.haarmuliplierweight} follows from boundedness of the martingale transform in the $A_p$ setting (cf. \cite{MR1748283}).
\endproof

\section{Proof of Proposition~\ref{p.restrictedweaktype}}
Without loss of generality assume $w(F)>0$ and $w(G)>0$ and furthermore $\max(w(F),w(G))=1$. The major subsets will be defined using the weighted dyadic  maximal function
\begin{equation*}
M _{w} f (x) := \sup _{I \;:\; x\in I} w (I) ^{-1} \int _{I} \lvert  f\rvert \; w (dx) \,.  
 \end{equation*}
$M _{w}$ bounded from $L^{1,\infty}(w)$ to $L^1(w)$, for any weight, with norm $1$.   \\

\noindent \underline {Case 1: $w(F) \le w(G)$}. It follows that $w(G)=\max(w(F),w(G))=1$. We define $\widetilde F=F$ and 	 
\begin{equation*}
\widetilde G := G \setminus \Bigl\{ M _{w} \mathbf 1_{F} > C w (F) \Bigr\}
\end{equation*}
for some large constant $C$. Assume without loss of generality that $I_P \cap \widetilde F \ne\emptyset$ where  <$ P \in \P$.  Thus, by  Lemma~\ref{l.size-bound} we have
\begin{equation}\label{e.sizebound}
\sigma:=\size(\P) \le C \min\Bigl(1, w (F)^{1/q}\Bigr)\,.
\end{equation}
Let $\tau=w(F)^{1/2q}$. By recursive applications of Lemma~\ref{l.treeselect-bysize} and Lemma~\ref{l.treeselect-bydensity}, we can divide $\P = \bigcup_{n\in \mathbb Z}\P_n$ such that $\P_n = \bigcup_{T\in\T_n} T$ is an union of trees satisfying:
\begin{gather*}
\sum_{T\in \T_n} w(I_T) \le 2^n 
\\
\size(\P_n) \le C \min(\sigma, 2^{-n/(2q)} \tau)\, , 
\\
\density(\P_n) \le C \min(1,2^{-n/r'}) \,.
\end{gather*}
Applying  the tree estimate \eqref{e.tree-est} (with $s=1$), we have
\begin{align*}
B_{\P}(f, g) &\le C\sum_{ n\in \mathbb Z } \sum_{T \in \T_n} w(I_T)\size(T)\density(T)
\\
&\le C { \sum_ {n \in \mathbb Z } 2^n  \min (\sigma, 2^{-\frac n{2q}} \tau) \min(1, 2^{-n/r'})}
\end{align*}
We show that for any $2q/r<\eta<1$ we have
\begin{equation}\label{e.twosidedsum}
\sum_n 2^n \min \Big(\sigma , 2^{-n/(2q)}\tau\Big) \min\Big(1, 2^{-n/r'} \Big) \le C \sigma ^{1-\eta} \tau^{\eta} \,.
\end{equation}
This will imply the desired bound \eqref{e.restrictedweaktype} for $B_{\P}(f, g)$, as one can select $\eta$ very close to $2q/r$ and use \eqref{e.sizebound} to obtain 
$$B_{\P}(f, g1_{\widetilde G})\le C \sigma ^{1- \eta } \tau ^{\eta } 
\le C w (F) ^{(1/q) (1- \eta /2)} \le C w (F) ^{1/p} = C w(F)^{1/p}w(G)^{1/p'}$$
for any $p$ such that {$\frac 1 p < \frac 1 q - \frac 1 r$}. Here, we used the assumption that $w (F) \le 1$. 

It remains to show \eqref{e.twosidedsum}. Take any $\alpha,\beta\in [0,1]$, we estimate the left hand side of \eqref{e.twosidedsum} by
\begin{align*}
&\le C \sigma  \sum_n 2^n \min\Big(1, 2^{- n/(2q)}\frac{\tau}{\sigma}\Big)^{\alpha} \min\Big(1, 2^{-n/r'}\Big)^{\beta}
\\
&\le C \sigma \sum_n 2^n \min\Big(1, 2^{-\alpha n/(2q)}2^{- \beta n/r'} \Bigl(\frac \tau \sigma \Bigr)^{\alpha}  \Big)
\end{align*}
The condition {$r>2q$} ensures that there exists $\alpha,\beta \in [0,1]$ satisfying
\begin{equation}\label{e.lowerconstraint}
\frac \alpha{2q} + \frac{\beta}{r'}>1 \,.
\end{equation}
If $\alpha,\beta$ are such, the last estimate is a two sided geometric series, so is controlled by the largest term, which is about the size of
$$C \sigma  \Bigl(\frac \tau \sigma \Bigr)^{\frac \alpha{\beta/(r')+\alpha/(2q)}} = C \sigma ^{1-\eta}\tau^{\eta} 
\qquad \eta := \frac \alpha{\beta/(r')+\alpha/(2q)} \,. $$
Varying $\alpha,\beta$ in $[0,1]$ respecting \eqref{e.lowerconstraint}, one can get any $\eta \in (\frac{2q}{r},1)$. \\

\noindent \underline{Case 2: $w (F) > w (G)$}. It follows that $w(F)=\max(w(F),w(G))=1$. We choose $\widetilde {G} = G$ and 
$$\widetilde F = F \setminus \Big\{M_w(1_G)> C w(G) \Big\}$$
for some large constant $C$. It follows that
$$\density(\P) \le C w(G)^{1/r'} \ \ .$$
while clearly $\size(\P) \le C$. By recursive applications of  Lemma~\ref{l.treeselect-bysize} and Lemma~\ref{l.treeselect-bydensity} we decompose
$\P = \bigcup_{n \in \mathbb Z}\P_n$ such that $\P_n = \bigcup_{T\in \T_n}T$ is a union of trees satisfying
\begin{gather*}
\sum_{T\in \T_n} w(I_T) \le 2^n 
\\
\size(\P_n) \le C 2^{-\frac n{2q}} 
\\
\density(\P_n) \le C 2 ^{-n/r'} w(G)^{1/r'} \ \ .
\end{gather*}

We now use Lemma~\ref{l.tree-redecomp} and decompose $\P_n$ into $\bigcup_{k\ge 0}\P_{n,k}$ such that each $\P_{n,k} = \bigcup_{T\in \T_{n,k}} T$ is a union of trees, with
$$\size(\P_{n,k}) \le C 2^{-(n+k)/(2q)} \ \ ,$$
$$\|\sum_{T\in \T_{n,k}} 1_{I_T} \|_{L^p(w)} \le C 2^{n+k}  w(F)^{1/p} = C 2^{n+k}\ \ ,$$
$$\|\sum_{T\in \T_{n,k}} 1_{I_T} \|_{L^1(w)} \le C \sum_{T\in \T_n} w(I_T) \le 2^n\ \ .$$
By interpolation of the last two estimates (use $p$ large in the first), we obtain
\begin{equation}\label{e.largep}
\|\sum_{T\in \T_{n,k}} 1_{I_T} \|_{L^{p-\epsilon}(w)} \le C 2^{k/p'} 2^{n} \ \ .
\end{equation}
It follows that 
\begin{align*}
B_{\P_{n,k}}(f, g) &= \int \sum_{T\in \T_{n,k}} 1_{I_T} \sum_{P \in T} \<f,\phi_{P_1}\>\phi_{P_1}a_P(x) g(x) w(x) dx
\\
&\le C \int \Big(\sum_{T \in \T_{n,k}} 1_{I_T}\Big)^{1/r}  \Big(\sum_{T\in \T_{n,k}}|\sum_{P \in T} \<f,\phi_{P_1}\>\phi_{P_1}a_P(x) g(x)|^{r'}\Big)^{1/r'} w(x)dx 
\end{align*}
For $p$ very large we estimate this by
\begin{align}\label{e.BPnk}
\le C\Big\|\big(\sum_{T \in \T_{n,k}} 1_{I_T}\big)^{1/r} \Big\|_{L^{p-\epsilon}(w)} \Big\|\big(\sum_{T\in \T_{n,k}}|\sum_{P \in T} \<f,\phi_{P_1}\>\phi_{P_1}a_P(x) g(x)|^{r'}\big)^{1/r'} \Big\|_{L^{(p-\epsilon)'}(w)}
\end{align}
We'll choose $p$ very large such that $p-\epsilon > r$. Since the function inside the $L^{(p-\epsilon)'}(w)$ norm is supported in $G$, by H\"older's inequality we can estimate the second factor by
$$\le w(G)^{1/(p-\epsilon)' - 1/r'}\Big\|\big(\sum_{T\in \T_{n,k}}|\sum_{P \in T} \<f,\phi_{P_1}\>\phi_{P_1}a_P(x) g(x)|^{r'}\big)^{1/r'} \Big\|_{L^{r'}(w)}$$
$$= w(G)^{1/(p-\epsilon)' - 1/r'}\Big(\sum_{T\in \T_{n,k}}\big\|\sum_{P \in T} \<f,\phi_{P_1}\>\phi_{P_1}a_P(x) g(x)\big\|_{L^{r'}(w)}^{r'} \Big)^{1/r'}$$
and using the tree estimate \eqref{e.tree-est} we can estimate the above expression by
$$\le w(G)^{1/(p-\epsilon)' - 1/r'} \Big(\sum_{T\in \T_{n,k}}w(I_T) \Big)^{1/r'} \size(\P_{n,k}) \density(\P_{n,k})$$
$$\le w(G)^{1/(p-\epsilon)' - 1/r'} 2^{n/r'} 2^{-(n+k)/(2q)} \min\big(2 ^{-n/r'} w(G)^{1/r'}, \density(\P)\big)  \ \ .$$
Since $\density(\P) \le C w(G)^{1/r'}$,  the above expression is controlled by
$$\le C w(G)^{1/(p-\epsilon)'} 2^{-(n+k)/(2q)} \min(1,2^{n/r'})  \ \ .$$
Using \eqref{e.largep}, we obtain an estimate for the first factor in \eqref{e.BPnk}:
$$\Big\|\big(\sum_{T \in \T_{n,k}} 1_{I_T}\big)^{1/r} \Big\|_{L^{p-\epsilon}(w)} = \Big\|\sum_{T \in \T_{n,k}} 1_{I_T} \Big\|_{L^{(p-\epsilon)/r}(w)}^{1/r}$$
$$\le C 2^{n/r} 2^{(1/r-1/p)k} \ \ .$$
Therefore
$$B_{\P_{n,k}}(f, g) \le C w(G)^{1/(p-\epsilon)'} 2^{n/r} 2^{(1/r-1/p)k}  2^{-(n+k)/(2q)} \min(1,2^{n/r'})  \ \ .$$
Note that {$r>2q$} by given assumption, so we always have
$$\frac 1 r < \frac 1 p + \frac 1 {2q} \ \ .$$
Then summing over $k\ge 0$, we obtain
$$B_{\P_n}(f,g) \le C w(G)^{1/(p-\epsilon)'} 2^{n(1/r-1/(2q))} \min(1,2^{n/r'})  \ \ .$$
Finally, summing over $n\in \mathbb Z$ we obtain
$$B_{\P}(f,g) \le C w(G)^{1/(p-\epsilon)'} \sum_{n\in\mathbb Z} 2^{n(1/r-1/(2q))} \min(1,2^{n/r'})$$
and this is a two-sided geometric series and it converges since {$1/r - 1/(2q) <0$} and $1/r + 1/r' - 1/(2q) > 0$. Thus, the series is dominated by its largest term, which is about the size of
$$Cw(G)^{1/(p-\epsilon)'}$$
Since $0< w(G)< 1$ and since we can choose $p<\infty$ arbitrarily large, it follows that for any finite $ p$, 
$$B_{\P}(f,g) \le C w(G)^{1/p'} = C w(F)^{1/p} w(G)^{1/p'}\,   \ \ ,$$
and this completes the proof of Proposition~\ref{p.restrictedweaktype}.

\begin{bibdiv}
\begin{biblist}

\bib{MR1019960}{article}{
   author={Bourgain, Jean},
   title={Pointwise ergodic theorems for arithmetic sets},
   note={With an appendix by the author, Harry Furstenberg, Yitzhak
   Katznelson and Donald S. Ornstein},
   journal={Inst. Hautes \'Etudes Sci. Publ. Math.},
   number={69},
   date={1989},
   pages={5--45},
   issn={0073-8301},
   review={\MR{1019960 (90k:28030)}},
}

\bib{MR0199631}{article}{
  author={Carleson, Lennart},
  title={On convergence and growth of partial sums of Fourier series},
  journal={Acta Math.},
  volume={116},
  date={1966},
  pages={135--157},
  issn={0001-5962},
  review={\MR {0199631 (33 \#7774)}},
}

\bib{do-muscalu-thiele}{article}
{
  author={Do, Yen},
  author={Muscalu, Camil},
  author={Thiele, Christoph},
  title={Variational estimates for paraproducts},
  journal={Rev. Mat. Iberoamericana},
  date={to appear},
}

\bib{MR0340926}{article}{
  author={Fefferman, Charles},
  title={Pointwise convergence of Fourier series},
  journal={Ann. of Math. (2)},
  volume={98},
  date={1973},
  pages={551--571},
  issn={0003-486X},
  review={\MR {0340926 (49 \#5676)}},
}

\bib{MR2115460}{article}{
  author={Grafakos, Loukas},
  author={Martell, Jose Maria},
  author={Soria, Fernando},
  title={Weighted norm inequalities for maximally modulated singular integral operators},
  journal={Math. Ann.},
  volume={331},
  date={2005},
  number={2},
  pages={359--394},
  issn={0025-5831},
  review={\MR {2115460 (2005k:42037)}},
  doi={10.1007/s00208-004-0586-2},
}

\bib{MR0238019}{article}{
  author={Hunt, Richard A.},
  title={On the convergence of Fourier series},
  conference={ title={Orthogonal Expansions and their Continuous Analogues (Proc. Conf., Edwardsville, Ill., 1967)}, },
  book={ publisher={Southern Illinois Univ. Press}, place={Carbondale, Ill.}, },
  date={1968},
  pages={235--255},
  review={\MR {0238019 (38 \#6296)}},
}

\bib{MR0338655}{article}{
  author={Hunt, Richard A.},
  author={Young, Wo Sang},
  title={A weighted norm inequality for Fourier series},
  journal={Bull. Amer. Math. Soc.},
  volume={80},
  date={1974},
  pages={274--277},
  issn={0002-9904},
  review={\MR {0338655 (49 \#3419)}},
}


\bib{MR2434308}{article}{
   author={Jones, Roger L.},
   author={Seeger, Andreas},
   author={Wright, James},
   title={Strong variational and jump inequalities in harmonic analysis},
   journal={Trans. Amer. Math. Soc.},
   volume={360},
   date={2008},
   number={12},
   pages={6711--6742},
   issn={0002-9947},
   review={\MR{2434308 (2010b:42017)}},
   doi={10.1090/S0002-9947-08-04538-8},
}

\bib{MR1689336}{article}{
  author={Lacey, Michael},
  author={Thiele, Christoph},
  title={On Calder\'on's conjecture},
  journal={Ann. of Math. (2)},
  volume={149},
  date={1999},
  number={2},
  pages={475--496},
  issn={0003-486X},
  review={\MR {1689336 (2000d:42003)}},
  doi={10.2307/120971},
}

\bib{MR1783613}{article}{
  author={Lacey, Michael T.},
  author={Thiele, Christoph},
  title={A proof of boundedness of the Carleson operator},
  journal={Math. Res. Lett.},
  volume={7},
  date={2000},
  number={4},
  pages={361\ndash 370},
  issn={1073-2780},
}

\bib{MR0420837}{article}{
   author={L{\'e}pingle, D.},
   title={La variation d'ordre $p$ des semi-martingales},
   language={French},
   journal={Z. Wahrscheinlichkeitstheorie und Verw. Gebiete},
   volume={36},
   date={1976},
   number={4},
   pages={295--316},
   review={\MR{0420837 (54 \#8849)}},
}

\bib{XiaochunLi}{article}{
  author={Li, Xiaochun},
  title={Personal communication},
}

\bib{MR0293384}{article}{
  author={Muckenhoupt, Benjamin},
  title={Weighted norm inequalities for the Hardy maximal function},
  journal={Trans. Amer. Math. Soc.},
  volume={165},
  date={1972},
  pages={207--226},
  issn={0002-9947},
  review={\MR {0293384 (45 \#2461)}},
}

\bib{MR2221256}{article}{
  author={Muscalu, C.},
  author={Tao, T.},
  author={Thiele, C.},
  title={The bi-Carleson operator},
  journal={Geom. Funct. Anal.},
  volume={16},
  date={2006},
  number={1},
  pages={230--277},
  issn={1016-443X},
  review={\MR {2221256 (2007a:42008)}},
  doi={10.1007/s00039-006-0553-z},
}

\bib{MR2127985}{article}{
  author={Muscalu, Camil},
  author={Tao, Terence},
  author={Thiele, Christoph},
  title={$L^p$ estimates for the biest. II. The Fourier case},
  journal={Math. Ann.},
  volume={329},
  date={2004},
  number={3},
  pages={427--461},
  issn={0025-5831},
  review={\MR {2127985 (2005k:42054)}},
}

\bib{MR1952931}{article}{
  author={Muscalu, C.},
  author={Tao, T.},
  author={Thiele, C.},
  title={A discrete model for the bi-Carleson operator},
  journal={Geom. Funct. Anal.},
  volume={12},
  date={2002},
  number={6},
  pages={1324--1364},
  issn={1016-443X},
  review={\MR {1952931 (2004b:42043)}},
  doi={10.1007/s00039-002-1324-0},
}

\bib{MR2127984}{article}{
  author={Muscalu, Camil},
  author={Tao, Terence},
  author={Thiele, Christoph},
  title={$L^p$ estimates for the biest. I. The Walsh case},
  journal={Math. Ann.},
  volume={329},
  date={2004},
  number={3},
  pages={401--426},
  issn={0025-5831},
  review={\MR {2127984 (2005k:42053)}},
}

\bib{oberlin-et-al}{article}
{
  author={Oberlin, Richard},
  author={Seeger, Andreas},
  author={Tao, Terence},
  author={Thiele, Christoph},
  author={Wright, Jim},
  title={A variation norm Carleson theorem},
  journal={JEMS},
  date={to appear},
}

\bib{oberlin-et-al-walsh}{article}{
  author={Oberlin, Richard},
  author={Seeger, Andreas},
  author={Tao, Terence},
  author={Thiele, Christoph},
  author={Wright, Jim},
  title={A variation norm Carleson theorem: Walsh case},
}

\bib{MR933985}{article}{
   author={Pisier, Gilles},
   author={Xu, Quan Hua},
   title={The strong $p$-variation of martingales and orthogonal series},
   journal={Probab. Theory Related Fields},
   volume={77},
   date={1988},
   number={4},
   pages={497--514},
   issn={0178-8051},
   review={\MR{933985 (89d:60086)}},
   doi={10.1007/BF00959613},
}

\bib{MR850681}{article}{
  author={Rubio de Francia, Jos{\'e} L.},
  title={A Littlewood-Paley inequality for arbitrary intervals},
  journal={Rev. Mat. Iberoamericana},
  volume={1},
  date={1985},
  number={2},
  pages={1--14},
  issn={0213-2230},
  review={\MR {850681 (87j:42057)}},
}

\bib{MR0241885}{article}{
  author={Sj{\"o}lin, Per},
  title={An inequality of Paley and convergence a.e. of Walsh-Fourier series. },
  journal={Ark. Mat.},
  volume={7},
  date={1969},
  pages={551--570 (1969)},
  issn={0004-2080},
  review={\MR {0241885 (39 \#3222)}},
}

\bib{ThielePhDThesis}{article}{
  author={Thiele, Christoph},
  title={Time-Frequency Analysis in the Discrete Phase Plane},
  journal={PhD thesis, Yale},
  date={1995},
}

\bib{MR1748283}{article}{
  author={Wittwer, Janine},
  title={A sharp estimate on the norm of the martingale transform},
  journal={Math. Res. Lett.},
  volume={7},
  date={2000},
  number={1},
  pages={1--12},
  issn={1073-2780},
  review={\MR {1748283 (2001e:42022)}},
}

\end{biblist}
\end{bibdiv}

\end{document}